\newtheorem{thm}{Theorem}[section]
\newtheorem{lem}[thm]{Lemma}
\newtheorem{prop}[thm]{Proposition}
\theoremstyle{definition}
\newtheorem{definition}[thm]{Definition}
\newtheorem{ex}[thm]{Example}
\theoremstyle{definition}
\theoremstyle{definition}
\newtheorem{rk}[thm]{Remark}
\newtheorem{cor}[thm]{Corollary}
\newtheorem{def-thm}[thm]{Definition-Theorem}
\newtheorem{def-lem}[thm]{Definition-Lemma}
\title{A Construction of Vertex Algebra Bundles on Logarithmic Smooth Curves}
\newcommand{\LS}{\operatorname{LS}^{\operatorname{fs}}}
\newcommand{\Spec}{\operatorname{Spec}}
\newcommand{\Spf}
{\operatorname{Spf}}
\newcommand{\Res}{\operatorname{Res}}
\newcommand{\im}{\operatorname{Im}}
\newcommand{\Fra}{\underline{\operatorname{Frame}}}
\newcommand{\Aut}{\operatorname{Aut}}
\newcommand{\Autu}{\underline{\operatorname{Aut}}}
\newcommand{\Lie}{\operatorname{Lie}}
\newcommand{\Der}{\operatorname{Der}}
\newcommand{\End}{\operatorname{End}}
\newcommand{\Hom}{\operatorname{Hom}}
\newcommand{\id}{\operatorname{id}}
\newcommand{\gp}{\operatorname{gp}}
\newcommand{\0}{|0\rangle}
\newcommand{\vp}{\varphi}
\newcommand{\dlim}{\varinjlim}
\newcommand{\ul}{\underline}
\newcommand{\fet}{\textrm{f\'{e}t}}
\newcommand{\et}{\textrm{\'{e}t}}
\newcommand{\circX}{{\mathop{X}\limits^{\circ}}}
\newcommand{\cC}{\mathcal{C}}
\newcommand{\cF}{\mathcal{F}}
\newcommand{\cI}{\mathcal{I}}
\newcommand{\cL}{\mathcal{L}}
\newcommand{\cM}{\mathcal{M}}
\newcommand{\cO}{\mathcal{O}}
\newcommand{\cV}{\mathcal{V}}
\newcommand{\cW}{\mathcal{W}}
\newcommand{\cY}{\mathcal{Y}}
\newcommand{\bA}{\mathbb{A}}
\newcommand{\bC}{\mathbb{C}}
\newcommand{\bF}{\mathbb{F}}
\newcommand{\bN}{\mathbb{N}}
\newcommand{\bP}{\mathbb{P}}
\newcommand{\bR}{\mathbb{R}}
\newcommand{\bZ}{\mathbb{Z}}
\def\blfootnote{\gdef\@thefnmark{}\@footnotetext}
\begin{document}
	\date{}
	\author{Xi-Chuan Tan}
	\affil{Department of Mathematics, University of Tsukuba}
	\maketitle
	
	\blfootnote{This work was supported by JST SPRING, Grant Number JPMJSP2124.}
	
	\begin{abstract}
		We present a construction of vertex algebra bundles and spaces of conformal blocks over families of logarithmic smooth curves.
		This work generalizes some earlier results by Frenkel and Ben-Zvi on vertex algebra bundles over complex smooth algebraic curves.
		We establish a weaker version of the propagation of vacua, 
		and compute the space of conformal blocks over a typical example of a nodal curve.
	\end{abstract}

	\section*{Introduction}
	Frenkel and Ben-Zvi developed a theory of vertex algebra bundles 
	in \cite{frenkel-benzvi} to study representation theory and algebraic curves. 
	In the present paper, 
	we generalize the base scheme
	of the bundle from a complex smooth algebraic curve  to a family of logarithmic smooth curves.
	This also provides an alternative approach to the vertex algebra theory on stable curves,
	compared with
	an earlier study by Damiolini, Gibney and Tarasca in \cite{damiolini}.

	There are three major ingredients.
	The first is to establish an appropriate definition of coordinates
	on logarithmic smooth curves.
	For a smooth complex curve $X$,
	a choice of an isomorphism of complete local rings
	$\widehat\cO_x\simeq\bC[[z]]$ is said to be a local coordinate at $x$.
	However such an isomorphism does not necessarily exist for a logarithmic smooth curve.
	Fortunately due to an earlier result of Kato in \cite{fkato} which asserts that a logarithmic smooth curve has at most ordinary double points,
	the construction of coordinates is reduced to
	the case that nodes appear.
	Motivated by the classical theory,
	we define a local coordinate to be a formally étale morphism of logarithmic schemes from the abstract disc $D$ to
	the curve $X$,
	which can be intuitively understood as an infinitesimal arc on the curve.
	We show that our definition is compatible with the classical one
	and works well with nodes.
	Roughly speaking,
	the reason for applying logarithmic geometry is that
	as a nice generalization of algebraic geometry in the language of schemes,
	it allows a stable curve to be considered smooth.
	In fact,
	smoothness is a notion depending on the category itself,
	and the category of log schemes admits more objects to be smooth.
	
	The second ingredient is the construction of the sheaf of vertex algebras associated to a given vertex algebra $V$ on the logarithmic smooth curve $X$.
	The group $\Autu^0\cO$ of continuous automorphisms of the disc $D$ has a natural action on the space $\Aut_x$ of local coordinate system at $x\in X$. 
	Moreover, 
	the conformal vertex algebra 
	(i.e., a vertex operator algebra in some other texts) 
	$V$ admits an $\Autu^0\cO$-action as well.  
	Then the associated sheaf $\cV$ 
	of vertex algebras
	is defined to be the ${\Autu^0\cO}$-equivariants of $V\otimes\pi_*\cO_{\Fra_X^\infty}$, 
	where $\Fra_X^\infty$ denotes the space of coordinates on $X$. 
	
	The third ingredient is the construction of conformal blocks.
	Conformal block is a concept from conformal field theory.
	Following the guidance of Carnahan in  \cite{carnahan2015equivariant}, 
	motivated by physical considerations
	we remove several points on $X$ to obtain the punctured space $\circX$. 
	For each conformal vertex algebra $V$, 
	there exists a space $\Lie_\circX(V)$ associated to $\circX$ which naturally acts on the $V$-module bundles. 
	If we only remove a single point $x$, 
	then the $\Lie_\circX(V)$-action
	is induced by
	the image of 
	the Lie algebra
	$\Lie_{D^\times}(V)$
	under the generalized vertex operation $\cY_{M,x}^\vee$
	where $D^\times$ is the punctured disc centered at $x$.
	Then the space of conformal blocks associated to the given data is defined to be
	$C_V(X,x,M)=\Hom_{\Lie_\circX(V)}(\cM,\cO_{\text{base}})$.
	As we will see in the main text,
	the spaces
	$\Lie_\circX(V)$ and
	$\Lie_{D^\times}(V)$
	heavily depend on the meromorphic differential forms on $X$.
	Thus the main difficulty is that
	it is hard to investigate these differential forms 
	near nodes.
	An approach is that we
	may apply the
	logarithmic differential forms
	which carry nice properties near nodes,
	rather than the usual meromorphic ones.
	More generally, 
	for an $n$-tuple of conformal $V$-modules $M_1,...,M_n$ respectively assigned to $n$ framed points in $X$, 
	the space of conformal blocks associated to the above data is $$C_V(X,(x_i),(M_i))=\Hom_{\Lie_\circX(V)}(\bigotimes_i \cM_i,\cO_{\text{base}}).$$ 
	There is a natural way to extend our construction to the multiple points case.
	We present a weak version propagation of vacua,
	which asserts that
	$$
	C_V(X,(x_i,y),(M_i,V))
	\hookrightarrow
	C_V(X,(x_i),(M_i)).
	$$
	This intuitively tells us that 
	if we puncture one more point on the curve and attach a regular $V$-module at this point,
	then the associated space of conformal blocks will be a subspace of the original one.

	The paper is organized as follows. 
	First we devote a section to reviewing log geometry and vertex algebras. 
	Then we  introduce the coordinate system, which is the key issue in the present paper. Constructions and several properties of vertex algebra bundles and conformal blocks are given in the sequel. 
	Finally we calculate a standard example from log geometry to illustrate our theory.

	\subsection*{Notation}
	For a sheaf $\cF$ on a Grothendieck topology
	$\cC=(\operatorname{Cat}\cC,\operatorname{Cov}\cC)$,
	we denote by $\cF(U')$
	the value of $\cF$ at
	$\{U'\to U\}\in\operatorname{Cov}\cC$.
	The symbol $\Gamma(U',\cF)$
	will be used with a different meaning in Section \ref{vacb},
	and so we would like to notice this to avoid confusion.
	If $X$ and $Y$ are objects in $\operatorname{Cat}\cC$,
	we set 
	$X(Y)=\Hom_{\operatorname{Cat}\cC}(Y,X)$,
	i.e., the collection of $Y$-points in $X$.
	
	\subsection*{Acknowledgments}
	The author would like to thank Scott Carnahan for many helpful comments.

	\section{Preliminaries}
	
	\subsection{Logarithmic geometry}\label{logsch}
	
	In this section we recall basic notions in logarithmic geometry.
	For the sake of simplicity, we write log (resp. prelog) for logarithmic (resp. prelogarithmic). 
	The theory of logarithmic geometry was initially established  by Fontaine, Illusie, and Kato  \cite{kato}.
	Details and further topics about log schemes can be found in \cite{ogus}.
	
	\begin{definition}
		Let $X$ be a scheme.
		A \textit{log structure} on $X$ is a morphism of sheaves of monoids $\alpha:M_X\to\cO_X$ such that the restriction of $\alpha$ to $\alpha^{-1}(\cO^*_X)$ is an isomorphism.
		A \textit{log scheme} is a pair $(X,\alpha)$
		consisting of a scheme $X$
		and a log structure 
		$\alpha$ on it.
	\end{definition}

	We note that the monoid structure on a commutative ring in the present paper is always given by multiplication. 
	Throughout the paper, a log structure on a scheme $X$ means a log structure on the small \'{e}tale site $X_\et$ unless specified. 
	For ease of notation, 
	we simply write $X$ for a log scheme,
	and denote by $\ul X$ the underlying scheme of $X$,
	with a log structure $\alpha_X$ on $\ul{X}$.
	And we still use $\cO_X$ to denote the structure sheaf of the underlying scheme of $X$.
	A morphism of log structures on $X$ from $\alpha:M_X\to \cO_X$
	to
	$\alpha':M_X'\to\cO_X$
	is a morphism of sheaves of monoids
	$\theta:M\to M'$
	such that 
	$\alpha'\circ\theta=\alpha$.
	Then log structures and morphisms between them then form the category of log structures on a given scheme.
	
	\begin{definition}
		A \textit{morphism of log schemes} $f:X\to Y$
		is 
		a morphism of the underlying schemes
		$\ul{f}:\ul{X}\to\ul{Y}$
		together with the following commutative diagram
		of sheaves of monoids on $Y$:
		$$
		\begin{tikzcd}
			M_Y \arrow[d, "\alpha_Y"'] \arrow[r, "f^\flat"] & f_*M_X \arrow[d, "f_*\alpha_X"] \\
			\cO_Y \arrow[r, "f^\sharp"]                     & f_*\cO_X                       
		\end{tikzcd}
	    $$
	\end{definition}
	
	\begin{definition}
		A \textit{prelog structure} $\alpha:M_X\to\cO_X$ on a scheme $X$ is a morphism of sheaves of monoids on $X_\et$.
		Likewise,
		a \textit{prelog scheme} is a pair $(X,\alpha)$
		consisting of a scheme $X$
		and a prelog structure 
		$\alpha$ on it.
	\end{definition}
    
    Every prelog structure
    $\alpha:M_X\to\cO_X$ on $X$
    admits an associated log structure
    $\alpha^{\log} :M^{\log}_X \to\cO_X$,
    defined to be the push-out of
    $$
    \begin{tikzcd}
    		M_X & \alpha^{-1}(\mathcal{O}_X^*) \arrow[r, "\alpha|_{\alpha^{-1}(\mathcal{O}_X^*)}"] \arrow[l, '] & \mathcal{O}_X^*
    \end{tikzcd}
    $$
    in the category of sheaves of monoids on $X_\et$, endowed with 
    $$M^{\log}_X \to\cO_X,\quad (a,b)\mapsto \alpha(a)b
    \quad
    \text{for}\ a\in M_X,\ b\in\cO^*_X.$$
    
    Analogously to the scheme-commutative ring
    dictionary,
    we introduce the
    log scheme-log ring
    dictionary as follows.
    \begin{definition}
    	A \textit{log ring} is a morphism of monoids
    	$\beta:P\to A$
    	where $A$ is a commutative ring.
    \end{definition}
    A log ring $\beta:P\to A$ is sometimes denoted by $(A,P)$ or $(A,\beta)$
    if no confusion arises. 
    If $P\to A$ is a log ring, then $\Spec(P\to A)$ is defined to be the log scheme whose underlying scheme is $\ul X:=\Spec A$, together with the log structure $P^{\log}\to \cO_X$ induced by $P\to \cO_X$ where $P$ is viewed as a constant sheaf on $X_\et$. 
    By a morphism $f:(A,P)\to (B,Q)$ of log rings, 
    we mean the following commutative diagram
    $$
    \begin{tikzcd}
    	A \arrow[r, "f^\sharp"]          & B           \\
    	P \arrow[u] \arrow[r, "f^\flat"] & Q \arrow[u]
    \end{tikzcd}
    $$
    of monoids.
    The above morphism $f$ is called an \textit{isomorphism} if $f^\sharp$ is an isomorphism of rings and $f^\flat$ is strict,
    i.e., the morphism 
    $P/P^*\to Q/Q^*$ induced by $f^\flat$ is an isomorphism.
    
    A monoid $M$ is called \textit{integral} if $m,m',m''\in M$ and $m+m'=m+m''$ implies that $m'=m''$. 
    If in addition $M$ is finitely generated, we say $M$ is \textit{fine}. 
    A monoid $Q$ is said to be \textit{saturated} if it is integral and if whenever $q\in Q^{\operatorname{gp}}$ is such that $mq\in Q$ for some $m\in\bZ_{>0}$, then $q\in Q$.
    Here $Q^{\operatorname{gp}}$ is the group associated to $Q$ which is identified with the cokernel of the diagonal embedding
    $Q\to Q\oplus Q$.
    
    A \textit{chart} for a log scheme $(X,M_X)$ subordinate to $Q$ is a monoid homomorphism $\beta:Q\to M_X(X)$ such that the associated morphism of sheaves of monoids $Q^{\log}\to M_X$ is an isomorphism.
	We call $M_X$ \textit{quasi-coherent} (resp. \textit{coherent}) if the restriction of $M_X$  to any $U\in X_\et$ admits a chart (resp. a chart subordinate to a finitely generated monoid). 
	A sheaf of monoids is \textit{fine} if it is coherent and integral. If in addition the domains of all these charts are saturated, it is said to be \textit{fs}. 
	A log scheme $(X,M_X)$ is called \textit{fs} (resp. coherent) if $M_X$ is.
    
    \begin{rk}
    	The inclusion $\cO_X^*\to\cO_X$
    	is called the 
    	\textit{trivial log structure}.
    	In particular,
    	the trivial log structure
    	on an affine open subset
    	$\Spec A$
    	is modeled by the monoid morphism
    	$0\to A$.
    \end{rk}
    
    \begin{definition}[\textbf{direct image of log structure}, \cite{ogus}-III-1.1.5]\label{direct}
    	Let $f:X'\to X$ be a morphism of schemes.
    	If $\alpha':M_{X'}\to\cO_{X'}$
    	is a log structure on $X'$, 
    	consider the Cartesian diagram
    	$$
    	\begin{tikzcd}
    		f_*(M_{X'})^{\log} \arrow[d] \arrow[r, "f_*(\alpha')^{\log}"] & \cO_X \arrow[d, "f^\sharp"] \\
    		f_*(M_{X'}) \arrow[r, "f_*(\alpha')"]                         & f_*(\cO_{X'})              
    	\end{tikzcd}
        $$
        Then  
        $f_*(\alpha')^{\log}$
        is a log structure on $X$,
        and
        $f_*^{\log}$
        is a functor from the category of log structures on $X'$ to the category of log structures on $X$.
    \end{definition}

    \begin{rk}
    	We now describe how a log ring
    	$\beta: P\to A$ gives rise to a log scheme
    	$X=\Spec(P \to A)$
    	modeled by it.
    	The monoid $P$ is viewed as a constant sheaf over $\Spec A$,
    	and this induces a prelog structure
    	$\beta:P\to\cO_X$.
    	For an open subset $U$ of $X$,
    	the value of $P^{\log}$ at $U$ is the amalgamated sum
    	$$P\oplus_{\beta^{-1}_U(\cO_X(U)^*)}\cO_X(U)^*.$$
    	In particular,
    	the space of global sections of $P^{\log}$ is 
    	$P\oplus_{\beta^{-1}(A^*)}A^*$.
    \end{rk}
    
    \begin{definition}\label{der}
    	Let $f:X\to Y$ be a morphism of prelog schemes and $E$ an $\cO_X$-module. An $E$-\textit{valued derivation} of $f$ is a pair $(D,\delta)$ where $D:\cO_X\to E$ is a morphism of abelian sheaves and $\delta:M_X\to E$ is a morphism of sheaves of monoids such that the following conditions are satisfied:
    	\\ 
    	1) $D(\alpha_X(m))=\alpha_X(m)\delta(m)$ for all local sections $m$ of $M_X$;
    	\\
    	2) $\delta(f^\flat(n))=0$ for all local sections $n$ of $M_Y$;
    	\\
    	3) $D(ab)=aD(b)+bD(a)$ for all local sections $a,b$ of $\cO_X$;
    	\\
    	4) $D(f^\sharp(c))=0$ for all local sections $c$ of $f^{-1}\cO_Y$.
    \end{definition}
    
    We denote by $\Der_{X/Y}(E)$ the set of all $E$-valued derivations.
    As the analogue of differentials of schemes,
    $\Der_{X/Y}(E)$ is represented by the module of \textit{log differentials},
    given in the following result.
    
    \begin{thm}[\cite{ogus}-IV-1.2.4]\label{logder}
    	Let $f:X\to Y$ be a morphism of prelog schemes. Then the functor $E\mapsto\Der_{X/Y}(E)$ is representable by an $\cO_X$-module $\Omega_{X/Y}^1$ endowed with a universal derivation $d\in\Der_{X/Y}(\Omega_{X/Y}^1)$.
    \end{thm}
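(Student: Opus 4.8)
The plan is to build $\Omega^1_{X/Y}$ by hand as an explicit quotient $\cO_X$-module, write down a candidate universal derivation on it, and then check the universal property directly; this is a representability argument with no deep obstacle, so the content lies entirely in getting the defining relations exactly right. Writing $\ul X,\ul Y$ for the underlying schemes, $\Omega^1_{\ul X/\ul Y}$ for the ordinary sheaf of relative K\"ahler differentials, and $M_X^{\gp}$ for the sheafification of $U\mapsto\Gamma(U,M_X)^{\gp}$, I would set
\[
\Omega^1_{X/Y}\;:=\;\Bigl(\Omega^1_{\ul X/\ul Y}\oplus\bigl(\cO_X\otimes_{\bZ}M_X^{\gp}\bigr)\Bigr)\big/\cK,
\]
where $\cK$ is the $\cO_X$-submodule generated \'etale-locally by the sections $\bigl(d(\alpha_X(m)),\,-\alpha_X(m)\otimes m\bigr)$ for $m$ a local section of $M_X$ and $\bigl(0,\,1\otimes f^\flat(n)\bigr)$ for $n$ a local section of $M_Y$. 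The candidate universal derivation $d=(d,d\log)$ is given by letting $d\colon\cO_X\to\Omega^1_{X/Y}$ send $a$ to the class of $(da,0)$ and $d\log\colon M_X\to\Omega^1_{X/Y}$ send $m$ to the class of $(0,1\otimes m)$, using the canonical map $M_X\to M_X^{\gp}$.

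First I would verify $d\in\Der_{X/Y}(\Omega^1_{X/Y})$: condition (3) (Leibniz) and condition (4) (vanishing on $f^{-1}\cO_Y$) are inherited verbatim from the universal property of $\Omega^1_{\ul X/\ul Y}$; condition (1) holds because $d(\alpha_X(m))$ and $\alpha_X(m)\,d\log(m)$ are the classes of $(d\alpha_X(m),0)$ and $(0,\alpha_X(m)\otimes m)$, which differ by a generator of $\cK$; and condition (2) holds because $(0,1\otimes f^\flat(n))$ is a generator of $\cK$. Next, given any $\cO_X$-module $E$ and any $(D,\delta)\in\Der_{X/Y}(E)$, I would produce the comparison map $\phi\colon\Omega^1_{X/Y}\to E$. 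The component $D$ is, by (3) and (4), an ordinary $E$-valued derivation of $\ul X/\ul Y$, hence factors uniquely as $\bar D$ through $\Omega^1_{\ul X/\ul Y}$; and $\delta$, being a monoid homomorphism into the abelian group $(E,+)$, extends uniquely to $M_X^{\gp}$, giving an $\cO_X$-linear map $a\otimes m\mapsto a\,\delta(m)$. Together these define an $\cO_X$-linear map on $\Omega^1_{\ul X/\ul Y}\oplus(\cO_X\otimes_{\bZ}M_X^{\gp})$, and conditions (1) and (2) for $(D,\delta)$ say exactly that it annihilates the two families of generators of $\cK$, so it descends to $\phi$. Since $d(\cO_X)$ together with $d\log(M_X)$ generate $\Omega^1_{X/Y}$ as an $\cO_X$-module, $\phi$ is the unique $\cO_X$-linear map with $\phi\circ d=D$ and $\phi\circ d\log=\delta$; conversely, pulling back an arbitrary $\cO_X$-linear $\psi\colon\Omega^1_{X/Y}\to E$ along $d$ gives an element of $\Der_{X/Y}(E)$, and these two assignments are mutually inverse and natural in $E$. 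This establishes the claimed representability.

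A few points deserve attention rather than effort. One should confirm that the construction is an honest sheaf on $X_\et$: $M_X^{\gp}$, the tensor product over $\bZ$, and the quotient by $\cK$ are all formed in the category of sheaves of $\cO_X$-modules, so the sheafifications involved are implicit but harmless, and because everything is governed by a local universal property the verification may be carried out on stalks, or \'etale-locally, where a chart $Q\to\Gamma(U,M_X)$ makes $M_X^{\gp}$ the constant sheaf $Q^{\gp}$ and the whole description completely explicit. The only genuinely delicate bookkeeping is around the groupification: that a monoid homomorphism $\delta$ into $(E,+)$ admits a unique extension to $M_X^{\gp}$ compatible with the module structure, and that the two chosen families of generators of $\cK$ encode conditions (1)--(4) with nothing omitted and nothing superfluous. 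Functoriality of $\Der_{X/Y}(-)$ and compatibility of the comparison bijection with \'etale restriction are then immediate, which completes the proof.
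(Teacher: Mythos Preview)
Your proposal is correct, and the paper does not give its own proof of this statement: it is quoted from Kato and immediately followed by the two explicit constructions of $\Omega^1_{X/Y}$, the first of which is precisely the quotient $(\Omega^1_{\ul X/\ul Y}\oplus(\cO_X\otimes M_X^{\gp}))/R$ with the same two families of generators you wrote down. Your verification of the universal property is the standard one and matches Kato's argument.
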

    
    For a morphism of log schemes $X\to Y$,  we denote by $\Omega_{\underline{X}/\underline{Y}}^1$ the sheaf of relative differentials of the underlying schemes. 
    Here are two concrete constructions of $\Omega_{X/Y}^1$ :
    \\ \hspace*{\fill} \\
    1) \cite{ogus}-IV-1.2.4: $\Omega_{X/Y}^1\simeq(\Omega_{\underline{X}/\underline{Y}}^1\oplus(\cO_X\otimes M_X^{gp}))/R$ where $R$ is the $\cO_X$-submodule generated by sections of the form
    $$(d\alpha_X(m),-\alpha_X(m)\otimes m),\quad \forall m\in M_X,$$
    $$(0,1\otimes f^\flat(n)),\quad \forall n\in f^{-1}M_Y.$$
    And the universal derivation is the evident maps
    $$d:\cO_X\to \Omega_{X/Y}^1,\quad d:M_X\to \Omega_{X/Y}^1$$
    where the latter $d$, sometimes denoted by $d\log$,
    is given by condition (1) in
    Definition \ref{der}.
    With this notation,
    we see that
    $d\alpha_X(m)=\alpha_X(m)dm$.
    For this reason we often write 
    \begin{equation}\label{logdiff}
    	1\otimes m=dm=\frac{d\alpha_X(m)}{\alpha_X(m)}
    \end{equation}
    in concrete examples;
    this also explains why 
    $d:M_X\to \Omega_{X/Y}^1$
    is written as $d\log$ in some other references.
    \\
    2) \cite{ogus}-IV-1.2.11: $\Omega_{X/Y}^1\simeq(\cO_X\otimes M_X^{gp})/(R_1+R_2)$ 
    induced by $dm\mapsto 1\otimes m$
    via the identification of $ \Omega_{X/Y}^1$ in (1),
    for $m\in M_X$,
    where
    \\
    $\bullet$ $R_1\subset \cO_X\otimes M_X^{gp}$ is the subsheaf of sections locally fo the form
    $$\sum_i\alpha_X(m_i)\otimes m_i-\sum_i\alpha_X(m_i')\otimes m_i',$$
    for $\sum_i\alpha_X(m_i)=\sum_i\alpha_X(m_i')$;
    \\
    $\bullet$ $R_2$ is the image of $\cO_X\otimes f^{-1}M_Y^{gp}\to\cO_X\otimes M_X^{gp}$.
    \\ \hspace*{\fill} 
    
    We briefly explain the above discussion in the language of log rings,
    which will be used in later construction.
    Let $\theta:(\alpha:A\to P)\to (\beta:B\to Q)$ be a morphism of log rings.
    Then $\theta$ gives rise to a morphism of affine log schemes
    $X\to Y$
    where $X=\Spec(Q\to B)$, $Y=\Spec(P\to A)$.
    The module of global sections of the $\cO_X$-module $\Omega_{X/Y}^1$ is isomorphic to
    $$(\Omega_{B/A}^1\oplus(B\otimes Q^{\gp}/P^{\gp}))/R.$$
    Here $R$ is the submodule of $(\Omega_{B/A}^1\oplus(B\otimes Q^{\gp}/P^{\gp}))$
    generated by elements of the form
    $$(d\beta(q),-\beta(q)\otimes\bar{\pi}(q))\quad\text{for}\ q\in Q$$
    where
    $\bar{\pi}:Q\to Q^{\gp}/P^{\gp}$
    is the canonical map induced by $\pi:Q\to Q^{\gp}$.
    Or alternatively, it is isomorphic to 
    $$(B\otimes Q^{\gp}/P^{\gp})/R'$$
    where $R'$ is the submodule generated by elements of the form
    $$\sum_i \beta(q_i)\otimes q_i-\sum_i \beta(q_i')\otimes q_i'\quad\text{for} \ \sum_i \beta(q_i)=\sum_i \beta(q_i').$$
     
     It is time to introduce the notion of smoothness for log schemes.
     Recall that a morphism of schemes is smooth if any infinitesimal thickening over it can be lifted.
     This idea is generalized to the category of log schemes as follows.
     
     \begin{definition}
     	A morphism $f:X\to Y$ of log schemes is \textit{strict} if the induced morphism of sheaves of monoids $f^*M_Y\to M_X$ is an isomorphism.
     	A \textit{log thickening} is a strict closed immersion $i:T'\to T$ of log schemes such that the square of the ideal sheaf $\cI$ of $T'$ in $T$ vanishes, and the subgroup $1+\cI$ of $\cO_{T}^*\simeq M_{T}^*$ operates freely on $M_T$. 
     	A \textit{log thickening over} $f$ is a commutative diagram
     	$$
     	\begin{tikzcd}\label{thi}
     		T' \arrow[d, "g"'] \arrow[r, "i"] & T \arrow[d, "h"] \\
     		X \arrow[r, "f"]                  & Y               
     	\end{tikzcd}
     	$$
     	where $i$ is a log thickening. 
     	A \textit{deformation} of $g$ to $T$ is an element of 
     	$$\operatorname{Def}_f(g,T):=\{\tilde{g}:T\to X:\tilde{g}\circ i=g,f\circ\tilde{g}=h\}.$$
        The morphism $f:X\to Y$ is \textit{formally smooth} (resp. \textit{unramified}, resp. \textit{\'{e}tale}) if for every log thickening $T'\to T$ over $f$, locally on $T$ there exists at least one (resp. at most one, resp. exactly one) deformation $\tilde{g}$ of $g$ to $T$.
        A morphism $f$ is \textit{smooth}
        (resp. \textit{\'{e}tale})
        if it is formally smooth 
        (resp. {\'{e}tale})
        and in addition $M_X$ and $M_Y$ are coherent and
        $\ul{f}$
        is locally of finite presentation.
    \end{definition}
    
    As a generalization of smoothness (resp. unramification, étaleness)
    for schemes,
    these properties are excepted to be compatible with the classical results.
    And fortunately we have the following result.
    (u-integral is a weaker condition compared to integral;
    a monoid $M$ is said to be \textit{u-integral} if 
    $m\in M$, $u'\in M^*$ and $m+u'=m$ implies that
    $u'=0$.)
    \begin{prop}[\cite{ogus}-IV-3.1.6]
    	\label{ogusetale}
    	Let $f:X\to Y$ be a strict morphism of log schemes.
    	If the morphism of underlying schemes 
    	$\ul{f}:\ul{X}\to\ul{Y}$
    	is formally smooth
    	(resp. étale, unramified),
    	then the same is true of $f$.
    	The converse holds if the log structure on $Y$ is u-integral.
    \end{prop}
    
    \begin{rk}\label{etaleproperty}
    	Some properties of
    	smooth (resp. unramified, étale)
    	morphism of schemes
    	also hold in log framework.
    	We list some of them which will be used later:
    	\\
    	1) (\cite{ogus}-IV-2.3.1) Let  $f:X\to Y$ and $g:Y\to Z$ be morphisms of log schemes.
    	Then there is an exact sequence of sheaves of $\cO_X$-modules
    	$$f^*\Omega_{Y/Z}^1
    	\to\Omega_{X/Z}^1\to\Omega_{X/Y}^1\to0.$$
    	2) (\cite{ogus}-IV-3.1.3) A morphism $f:X\to Y$ of log schemes is formally unramified if 
    	$\Omega_{X/Y}^1=0$.
    	The converse holds if $X$ and $Y$ are coherent.
    \end{rk}

    \begin{rk}
    	Note that whenever
    	we write $X_{\et}$
    	for a log scheme $X$,
    	it denotes the small étale site of the underlying scheme of $X$,
    	rather than the category of étale morphisms to the log scheme itself.
    \end{rk}
    
    \subsection{Vertex algebras}
	We recall the basic definitions of vertex algebras
	and their representations. 
	For more details on vertex algebras, 
	see \cite{frenkel-benzvi}.
	\begin{definition}
		A \textit{vertex algebra} is a vector space $V$ over a field of characteristic $0$, equipped with the \textit{vacuum vector} $\0$, 
		the \textit{translation operator} $T\in\End V$, and the \textit{vertex operation} $Y(-,z):V\to\End V[[z^{\pm1}]]$ 
		which  linearly takes each $A\in V$ to a \textit{field} $$Y(A,z)=\sum_{n\in\bZ}A_nz^{-n-1}$$ 
		such that the following axioms are satisfied:
		\\
		1) \textit{Vacuum axiom}: $Y(\0,z)=\id_V$. Furthermore, $A_n\0=0$ for $n\geq0$ and $A_{-1}\0=A$;
		\\
		2) \textit{Translation axiom}: for any $A\in V$, $[T,Y(A,z)]=\partial_z Y(A,z)$;
		\\
		3) \textit{Locality axiom}: for any $A,B\in V$, there exists some $N\in\bZ_{\geq0}$ such that $$(z-w)^N[Y(A,z),Y(B,w)]=0.$$
	\end{definition}
	
	A vertex algebra $V$ is called $\bZ$-graded if $V$ is a $\bZ$-graded vector space, 
	$\0$ is a vector of degree $0$, 
	$T$ is a linear operator of degree $1$, 
	and for $A\in V_m$, $\deg A_n=-n-1+m$. 
	In many texts,
	the locality axiom is called \textit{weak commutativity}.
	
	\begin{definition}
		A $\bZ$-graded vertex algebra $V$ is called \textit{conformal of central charge} $c\in\bC$, if there is a given non-zero \textit{conformal vector} $\omega\in V_2$ such that the Fourier coefficients $L_n$,
		$n\in\bZ$ of the corresponding vertex operator $Y(\omega,z)=\sum_{n\in\bZ}L_nz^{-n-2}$ satisfy the following conditions:
		\\
		1) $[L_n,L_m]=(n-m)L_{n+m}+\frac{n^3-n}{12}\delta_{n,-m}c\id_V$;
		\\
		2) $L_{-1}=T$, $L_0|_{V_n}=n\id_{V_n}$.
	\end{definition}
	
	\begin{rk}\label{jacobi}
		As an analogue of Lie algebra,
		a vertex algebra carries the
		\textit{Jacobi identity}:
		$$
		\sum_{i\geq0}(-1)^i{{l}\choose{i}}A_{m+l-i}B_{n+i}
		-
		(-1)^l\sum_{i\geq0}(-1)^i{{l}\choose{i}}B_{n+l-i}A_{m+i}
		=
		\sum_{i\geq0}{{m}\choose{i}}(A_{l+i}B)_{m+n-i}.$$
		As shown in \cite{lepowsky2004introduction}-3.4.1\&3.5.1
		that
		the Jacobi identity is equivalent to the weak commutativity together with the translation axiom.
		Here are two useful formulas \cite{lepowsky2004introduction}-(3.1.9)\&(3.1.12) derived from the Jacobi identity,
		which will be used in the subsequent proofs:
		$$
		[A_m,B_n]=\sum_{i\geq0}{{m}\choose{i}}(A_iB)_{m+n-i};$$
		$$(A_mB)_n=
		\sum_{i\geq0}(-1)^i{{m}\choose{i}}
		(A_{m-i}B_{n+i}-(-1)^mB_{m+n-i}A_i)$$
 		for $A,B\in V$, $m,n\in\bZ$.
	\end{rk}
	
	\begin{definition}[\cite{frenkel-benzvi}-6.4.1]
		\label{primary}
		A vector $A$ in a conformal vertex algebra $V$ is called  \textit{primary
		of conformal dimension} $\Delta\in\bZ_{>0}$ if
		$$L_0A=\Delta A,\quad L_nA=0\quad\text{for}\ n>0.$$
	\end{definition}

    \begin{rk}
    	In \cite{frenkel-benzvi}-6.4,
    	a primary vector is defined in a so-called \textit{quasi-conformal} vertex algebra
    	(a weaker notion than that of a conformal vertex algebra),
    	but we do not  necessarily need it.
    \end{rk}
	
	\begin{definition}
		Let $(V,\0,T,Y)$ be a vertex algebra. A vector space $M$ is called a $V$-\textit{module} if it is equipped with an operation $Y_M:V\to\End M[[z^{\pm1}]]$ which assigns to each $A\in V$ a field $Y_M(A,z)=\sum_{n\in\bZ}A_n^Mz^{-n-1}$ subject to the following axioms:
		\\ 
		1) $Y_M(\0,z)=\id_M$, and $Y_M(A,z)m\in M((z))$ for all $A\in V$, $m\in M$;
		\\
		2) for all $A,B\in V$, there exists some $N\in\bZ_{\geq0}$ such that $$(z-w)^N[Y_M(A,z),Y_M(B,w)]=0;$$
		3) for all $A\in V$, there exists some $l\in\bZ_{\geq0}$ such that for any $B\in V$,
		$$(z+w)^lY_M(Y(A,z)B,w)=(z+w)^lY_M(A,z+w)Y_M(B,w).$$
	\end{definition}
		If $V$ is  $\bZ$-\textit{graded}, then $M$ is called $\bZ$-graded if $M$ is a $\bC$-graded vector space and for $A\in V_m$, $\deg A_n^M=-n-1+m$. 
		If $V$ is conformal with conformal vector $\omega$, then $M$ is called a \textit{conformal} $V$-\textit{module} if addition that the Fourier coefficient $L_0^M$ of the field $Y_M(\omega,z)=\sum_{n\in\bZ}L_n^Mz^{-n-2}$ acts semi-simply on $M$.

    For a vertex  algebra $(V,\0,T,Y)$,
    let $\partial=T\otimes\id+\id\otimes\partial_t$
    be a linear operator on $V\otimes\bC[t^{\pm1}]$. 
    The completion of $U'(V):=V\otimes\bC[t^{\pm1}]/\im\partial$ with respect to the $(t)$-adic topology on $\bC[t^{\pm1}]$ is denoted by $U(V)$.
    We denote by $A_{[n]}$ the projection of $A\otimes t^k\in V\otimes\bC[t^{\pm1}]$
    in $U'(V)$,
    and then $U'(V)$ is spanned by $A_{[n]}$, $A\in V$, $n\in\bZ$, with relation
    $(TA)_{[n]}=-nA_{[n-1]}$.
    The bilinear map 
    $[\ ,\ ]:U'(V)^{\otimes 2}\to U'(V)$
    defined by
    $$[A_{[m]},B_{[k]}]=
    \sum_{n\geq0}{{m}\choose{n}}(A_nB)_{[m+k-n]}$$
    gives rise to a Lie bracket on $U'(V)$ (\cite{frenkel-benzvi}-4.1.2).
    This Lie bracket is continuously extended to $U(V)$.
    If we give a $\bZ$-gradation on $U(V)$ by setting
    $\deg A_{[n]}=\deg A-n-1$,
    then
    there is a natural Lie algebra homomorphism (\cite{frenkel-benzvi}-4.1.2)
    \begin{equation*}
    	\begin{aligned}
    		U(V)&\to \End V
    		\\
    		\sum_{n\geq N}f_n A_{[n]}&\mapsto
    		\Res_{z=0}Y(A,z)f(z)dz
    	\end{aligned}
    \end{equation*}
    preserving the $\bZ$-gradation,
    where $f(z)=\sum_{n\geq N}f_nz^n\in\bC((z))$.
    
    The above construction of $U'(V)$ and $U(V)$ will play an important role in the proof of Proposition \ref{lieD},
    together with the following example,
    which suggests that vertex algebra is also analogous to the notion of commutative associative algebra.
    
	\begin{ex}[\cite{frenkel-benzvi}-1.4]\label{comm}
		Let $V$ be a commutative $\bC$-algebra with a unit $\0$ and finite dimensional homogeneous components,
		with a derivation $T$ of degree $1$.
		Then $(V,\0,T,Y)$ will be a vertex algebra if we define
		$$Y(A,z):=\sum_{n\geq0}\frac{z^n}{n!}(T^nA)=e^{zT}A.$$
		Firstly
		the series $Y(A,z)$ is clearly a field as 
		$Y(A,z)B\in V[[z]]$.
		Let us check the remaining axioms.
		\\
		1) vacuum axiom:
		 We have $T\0=0$ by the definition of derivation,
		 and thus
		$Y(\0,z)=e^{zT}\0=\id_V$.
		From the definition of $Y(A,z)$ observe that
		$A_n=\frac{1}{(-n-1)!}T^{-n-1}A$
		for $n\leq-1$,
		and $A_n=0$ for $n\geq0$.
		Then clearly $A_n\0=0$ for $n\geq0$,
		and $A_{-1}\0=A\0=A$;
		\\
		2) translation axiom:
		One has
		$$[T,Y(A,z)]=\sum_{n\leq-1}\frac{z^{-n-1}}{(-n-1)!}[T,T^{-n-1}A],\quad
		\partial_zY(A,z)=
		\sum_{n\leq-1}\frac{z^{-n-1}}{(-n-1)!}T^{-n}A.$$
		For $B\in V$, it follows that
		\begin{equation*}
			\begin{aligned}
				[T,T^{-n-1}A]B&=T((T^{n-1}A)B)-(T^{n-1}A)(TB)
				\\
				&=(T^nA)B+(T^{n-1}A)(TB)-(T^{n-1}A)(TB)=(T^nA)B
			\end{aligned}
		\end{equation*}
		for $n\geq1$,
		and thus $[T,Y(A,z)]=\partial_zY(A,z)$.
		\\
		3) locality axiom: Since $Y(A,z)$ and $Y(B,w)$ are regular in $z$ and $w$ respectively from the definition,
		immediately it holds that
		$[Y(A,z),Y(B,w)]=0$.
		\\
		Moreover,
		for $A\in V_m$,
		it is easy to see that
		$\deg A_n=\deg T^{-n-1}A=-n-1+m$.
		Hence if the degree of the unit $\0$ is $0$,
		the vertex algebra $V$ will be $\bZ$-graded. 
	\end{ex}
    
    We end this section with the tensor product of vertex algebras.
    Because the concrete construction will be needed later,
    we provide a proof.
    
    \begin{prop}[\textbf{Tensor products of vertex algebras} \cite{lepowsky2004introduction}-3.12.5
    ]\label{tensor}
    	Let $(V_i,\0^{(i)},T^{(i)},Y^{(i)})$, $i=1,...,r$ be vertex algebras.
    	Denote by
    	$$V=V_1\otimes\cdots\otimes V_r$$
    	and define
    	$$Y(A^{(1)}\otimes\cdots\otimes A^{(r)},z)=
    	Y^{(1)}(A^{(1)},z)\otimes\cdots\otimes
    	Y^{(r)}(A^{(r)},z)$$
    	$$\0=\0^{(1)}\otimes\cdots\otimes \0^{(r)}$$
    	$$T=\sum_{i=1}^r
    	\id_{V_1}\otimes\cdots\otimes
    	T^{(i)}\otimes\cdots\otimes
    	\id_{V_r}$$
    	for $A^{(i)}\in V_i$, $i=1,...,r$.
    	Then $(V,\0,Y,T)$ carries a structure of a vertex algebra.
    \end{prop}
    
    \begin{proof}
    	Writing the vertex operator as 
    	$$Y(A^{(1)}\otimes\cdots\otimes A^{(r)},z)=
    	\sum_{n\in\bZ}
    	\left(
    	\sum_{n_1+...+n_r=n}
    	A^{(1)}_{n_1}\otimes\cdots\otimes A^{(r)}_{n_r}
    	\right)
    	z^{-n-r}$$
    	we see that
    	$$(A^{(1)}\otimes\cdots\otimes A^{(r)})_{-1}\0=
    	\left(
    	\sum_{n_1+...+n_r=-r}
    	A^{(1)}_{n_1}\otimes\cdots\otimes A^{(r)}_{n_r}
    	\right)
    	(\0^{(1)}\otimes\cdots\otimes \0^{(r)}).
    	$$
    	Since $A^{(i)}_{n_i}\0^{(i)}=0$ for $n_i\geq0$,
    	the nonzero term of 
    	$(A^{(1)}\otimes\cdots\otimes A^{(r)})_{-1}\0$
    	in the summation
    	is of the case $n_1=...=n_r=-1$,
    	which implies that
    	$$(A^{(1)}\otimes\cdots\otimes A^{(r)})_{-1}\0
    	=
    	A^{(1)}_{-1}\0^{(1)}\otimes\cdots\otimes A^{(r)}_{-1}\0^{(r)}=
    	A^{(1)}\otimes\cdots\otimes A^{(r)}.$$
    	This shows the vacuum axiom.
    	The translation axiom is satisfied from
    	\begin{equation*}
    		\begin{aligned}
    			[T,Y(A^{(1)}\otimes\cdots\otimes A^{(r)},z)]&=
    			\left[\sum_{i=1}^r
    			\id_{V_1}\otimes\cdots\otimes
    			T^{(i)}\otimes\cdots\otimes
    			\id_{V_r},
    			Y^{(1)}(A^{(1)},z)\otimes\cdots\otimes
    			Y^{(r)}(A^{(r)},z)
    			\right]
    			\\
    			&=
    			\sum_{i=1}^r
    			Y^{(1)}(A^{(1)},z)\otimes\cdots\otimes
    			[T^{(i)},Y^{(i)}(A^{(i)},z)]
    			\otimes\cdots\otimes
    			Y^{(r)}(A^{(r)},z)
    			\\
    			&=
    			\sum_{i=1}^r
    			Y^{(1)}(A^{(1)},z)\otimes\cdots\otimes
    			\partial_zY^{(i)}(A^{(i)},z)
    			\otimes\cdots\otimes
    			Y^{(r)}(A^{(r)},z)
    			\\
    			&=\partial_zY(A^{(1)}\otimes\cdots\otimes A^{(r)},z).
    		\end{aligned}
    	\end{equation*}
        Because the Lie bracket in each $\End V_i$ is the commutator,
        we have that
        \begin{equation*}
        	\begin{aligned}
        		[Y(A^{(1)}\otimes\cdots\otimes A^{(r)},z)&,
        		Y(B^{(1)}\otimes\cdots\otimes B^{(r)},w)]\\
        		&=[Y^{(1)}(A^{(1)},z), Y^{(1)}(B^{(1)},z)]\otimes\cdots\otimes
        		[Y^{(r)}(A^{(r)},z),Y^{(r)}(B^{(r)},z)].
        	\end{aligned}
        \end{equation*}
        By assumption,
        there is a positive integer $N_i$ for each $i=1,...,r$ such that 
        $$(z-w)^{N_i}[Y^{(i)}(A^{(i)},z), Y^{(i)}(B^{(i)},z)]=0.$$
        Then setting $N=N_1+\cdots+N_r$,
        we have
        $$(z-w)^N [Y(A^{(1)}\otimes\cdots\otimes A^{(r)},z),
        Y(B^{(1)}\otimes\cdots\otimes B^{(r)},w)]=0.$$
        This leads to the locality axiom,
        and therefore we complete the proof.
    \end{proof}
	
	\section{Coordinate system}
	
	In this section, 
	we describe the coordinate system, 
	which behaves subtly near nodes in the usual sense.
	The main idea of this section is adapted from \cite{carnahan2015equivariant},
	with some modifications.
	\par 
	Let $S$ be an fs log scheme, and denote by $\LS_S$ the category of fs log schemes over $S$. Let $X\in\LS_S$ be  a log curve over $S$ in the sense of \cite{fkato}, i.e.,
	the morphism of fs log schemes $X\to S$ is smooth and integral, with reduced and connected curves as geometric fibers.
	From \cite{ogus}-III-2.1.2,
	we know that the tensor product of log schemes is well-defined and the functor
	$X\to\ul{X}$ taking a log scheme to its underlying scheme commutes with the fiber product.
	Thus for $T\in\LS_S$, 
	we denote by $X_T$ the $T$-log scheme arising from
	the fiber product of $X$ and $T$ in the category $\LS_S$.
    \par 
    The (formal) \textit{disc} $D$ is defined to be the formal scheme $\Spf\bZ[[t]]$, endowed with the log structure induced by the chart $\bN\to\bZ[[t]]$, $n\mapsto t^n$. 
    It is the direct limit of the jet schemes $D_n=\Spec (\bN\to\bZ[t]/(t^{n+1}))$. Let $D^\times=\Spec\bZ((t))$ be the \textit{punctured disc}, with the trivial log structure. 
    The usual abstract \textit{disc} is defined by $D^\circ=\Spec\bZ[[t]]$, with the same log structure as that on $D$.
    In fact, 
   the log structure on $D^\circ$ is the direct image (recall Definition \ref{direct}) 
   of the trivial log structure on $D^\times$.
   Through the natural embedding
   $D^\times\hookrightarrow
   D^\circ$
   it holds that
   $$
   \bZ((t))^*\oplus_{\bZ((t))}\bZ[[t]]\simeq
   \bZ((t))^*\cap\bZ[[t]]=t^\bN\bZ[[t]]^*\simeq
   \bN\oplus\bZ[[t]]^*$$
   where the last isomorphism
   of monoids gives rise to an isomorphism of log structures on $D^\circ$.
    
    \begin{def-lem}
    	The presheaf $\Autu\cO$ of groups on $\LS_S$ is  defined by 
    	$$\Autu\cO(T)=\Aut_T(D_T)$$
    	as the space of continuous $T$-automorphisms of $D_T$. Moreover the subpresheaf $\Autu^0\cO$ of $\Autu\cO$ whose value $\Autu^0\cO(T)$ at $T$ is defined to be the space of $T$-automorphisms of $D_T$ preserving the base points (this is equivalent to saying that which preserves the point generated by the topological generator $t$). 
    	We denote their sheafifications by the same symbols.
    \end{def-lem}

    \begin{proof}
    	We need to prove that 
    	$\Autu\cO$ and 
    	$\Autu^0\cO$
    	are functors.
    	Let $U\to V$ be a morphism in $\LS_S$. Then $D_U$ is the pull-back of $U\to V\leftarrow D_V$, i.e., 
    	the base change of $D_V$ along $U\to V$. 
    	This implies that a $V$-automorphism of $D_V$ can be uniquely lifted to a $U$-automorphism of $D_U$,
    	which indicates that we have a map
    	$\Autu\cO(V)\to
    	\Autu\cO(U)$.
    	Locally we suppose that $V=\Spec R$ (resp. $U=\Spec S$) is endowed with the log structure modeled by the chart $Q_V$ (resp. $Q_U$). The morphism $U\to V$ is equivalent to
    	$$
    	\begin{tikzcd}
    		R \arrow[r]             & S             \\
    		Q_V \arrow[u] \arrow[r] & Q_U \arrow[u]
    	\end{tikzcd}
        $$
        For $\vp\in\Autu^0\cO(V)$, we need to check whether the extension $\vp_U$ of $\vp$ from $V$ to $U$ preserves the base point of $D_U$ generated by $t$. 
        For $f(t)\in tS[[t]]$,
        let $f(t)=\sum_{i>0}s_it^i$ for $s_i\in S$, 
        then
        $\vp^\sharp_Uf(t)=\sum s_i\vp^\sharp(t^i)$.
        Since $\vp^\sharp(t^i)\in tR[[t]]$, we have that $\vp^\sharp_Uf(t)\in tS[[t]]$. Namely, $\vp_U\in\Autu^0\cO(U)$.
        Hence we obtain a map
        $\Autu^0\cO(V)\to\Autu^0\cO(U)$
        associated to $U\to V$.
    \end{proof}
    
    A $D_T$-\textit{point} on $X_T$ over $T$ is defined to be a morphism of $T$-log schemes $x:D_T\to X_T$.
    Each $D_T$-point $x$ uniquely induces a 
    $T$-log scheme morphism $x^\circ:D^\circ_T \to X$
    whose locally associated log ring map coincides with that of $x$.

    \begin{def-lem}
    	The functor $\Fra^\infty(T):X_\et\to \mathbf{Set}$ is defined as follows. 
    	For $U\in X_\et$,
    	the value of $\Fra_U^\infty$ at $T$ is defined to be 
    	$$
    	\begin{aligned}
    		\Fra&_U^\infty(T)=
    		\\
    		&\left\{z\in \Hom^{\fet}_T(D_T,U_T): 
    		\begin{tikzcd}
    			D^\circ_T \arrow[r, "z^\circ"] \arrow[rd, "\theta_z"', dashed] & U_T                             \\
    			& D^\circ_T \arrow[u, "x^\circ"']
    		\end{tikzcd}\ 
    		\text{for some}\ D_T-\text{point}\ x\ \text{and}\ \theta_z\in \Aut^0_T(D^\circ_T)\right\}.
    	\end{aligned}
    	$$ 
    	where $\Hom^{\fet}$ denotes formally \'{e}tale morphisms,
    	and $\Aut^0_T(D^\circ_T)$
    	is the collection of continuous $T$-automorphisms of $D^\circ_T$ preserving base points.
    \end{def-lem}

    \begin{proof}
    	We need to show that
    	$\Fra^\infty(T)$
    	is a functor.
    	Let $\ul{f}:U\to V$ be a morphism in $X_\et$, then $\ul{f}$ is \'{e}tale, and so is the extension $\ul{f}_T$.
    	Thus $f_T$ is also formally étale by Proposition \ref{ogusetale}
    	as the log structures on $U$ and $V$ are both derived from that on $T$.
    	Since (formally) \'{e}taleness is preserved under composition,
    	it follows that $f_T\circ z^\circ$ is formally \'{e}tale.
    	Next we show that if such $x^\circ$ exists,
    	then it is formally \'{e}tale.
    	By the following Theorem \ref{exactseq}
    	and Remark \ref{etaleproperty}-(2),
    	we have an isomorphism
    	$\theta^*_z\Omega^1_{x^\circ}\simeq\Omega^1_{z^\circ}=0$.
    	As $\theta_z$ is an isomorphism,
    	it follows that $\Omega_{x^\circ}^1=0$,
    	and thus $x^\circ$ is formally unramified.
    	Let $J'\to J$ be an arbitrary log thickening
    	over $x^\circ$, i.e.,
    	$$
    	\begin{tikzcd}
    		D^\circ_T \arrow[r, "x^\circ"]    & U_T              \\
    		J' \arrow[r, "i"] \arrow[u, "g'"] & J \arrow[u, "g"]
    	\end{tikzcd}
        $$
        If we denote by $J''$ the base change of $J'\to D^\circ_T$ via $\theta_z$,
        there will be a commutative diagram
        $$
        \begin{tikzcd}
        	D^\circ_T \arrow[r, "\theta_z"]                    & D^\circ_T \arrow[r, "x^\circ"]    & U_T              \\
        	J'' \arrow[r, "\tilde{\theta}_z"] \arrow[u, "g''"] & J' \arrow[r, "i"] \arrow[u, "g'"] & J \arrow[u, "g"]
        \end{tikzcd}
        $$
        In particular
        $\tilde{\theta}_z:J''\simeq J'$ since $\theta_z$ is an automorphism.
        As $z^\circ=x^\circ\circ\theta_z$ is formally \'{e}tale
        and 
        $i \circ\tilde{\theta}_z$ is a log thickening as well,
        there exists a unique $T$-morphism $\tilde{g}'':J\to D^\circ_T$ such that
        $g=x^\circ\circ\theta_z\circ\tilde{g}''$ and
        $g''=\tilde{g}''\circ i\circ\tilde{\theta}_z$.
        Let $\tilde{g}'=\theta_z\circ\tilde{g}''$.
        Then it follows immediately that
        $x^\circ\circ\tilde{g}'=x^\circ\circ \theta_z\circ\tilde{g}''=g$.
        Besides,
        we have
        $\tilde{g}'\circ i\circ\tilde{\theta}_z=\theta_z\circ\tilde{g}''\circ i\circ\tilde{\theta}_z=\theta_z\circ g''=g'\circ\tilde{\theta}_z$.
        Thus $\tilde{g}'\circ i=g'$ because $\tilde{\theta}_z$ is an automorphism.
        This proves that $x^\circ$ is formally smooth,
        and 
        therefore formally \'{e}tale.
    	Moreover, $f_T\circ x^\circ$ is formally \'{e}tale, and it is associated to a $D_T$-point on $V_T$. 
    	Hence we have proved that a morphism $\Fra_U^\infty(T)\to\Fra_V^\infty(T)$
    	derived from $U\to V$.
    	This implies that $\Fra^\infty(T)$ is a functor.
    \end{proof}

    \begin{rk}
    	The condition for
    	$z\in\Hom^{\fet}_T(D_T,U_T)$ 
    	to be an element in $\Fra^\infty(T)$
    	is equivalent to
    	\begin{equation*}
    		\begin{tikzcd}
    			D_T \arrow[r, "z"] \arrow[rd, "\theta_z"', dashed] & U_T                 \\
    			& D_T \arrow[u, "x"']
    		\end{tikzcd}
    	    \quad
    	    \text{for some}\ D_T-\text{point}\ x\ \text{and}\ \theta_z\in \Autu^0\cO(T).
    	\end{equation*}
    	However we use the expression as in the above Definition–Lemma, 
    	since this makes the proof easier to follow.
    \end{rk}
    
    \begin{thm}[\cite{ogus}-IV-3.2.3]\label{exactseq}
    	Let $X\to Y$ and $g:Y\to Z$ be morphisms of coherent log schemes,
    	and consider the exact sequence from Remark \ref{etaleproperty}-(1)
    	$$f^*\Omega_{Y/Z}^1
    	\stackrel{s}{\longrightarrow}
    	\Omega_{X/Z}^1
    	\stackrel{}{\longrightarrow}
    	\Omega_{X/Y}^1
    	\longrightarrow
    	0.$$
    	If $f$ is smooth, 
    	the map $s$ is injective and locally split.
    	In particular if $f$ is étale,
    	then $s$ is an isomorphism.
    \end{thm}

    Elements of $\Fra_U^\infty(T)$ and $\Autu^0\cO(T)$ are called $T$-\textit{valued coordinates on} $U$   and $T$-\textit{valued coordinate transformations} respectively. 
    We simply call the functors $\Fra^\infty$ and $\Autu^0\cO$ the \textit{space of coordinates} and \textit{space of coordinate transformations} respectively.
    
    \begin{rk}
    If we ignore the base change and take the base scheme $S$ to be the trivial log scheme $\Spec(0\to\bZ)$, 
    then locally a coordinate on
    $\Spec(Q_A\to A)$
    over $S$
    is given by the commutative diagram
    $$
    \begin{tikzcd}
    	Q_A \arrow[d] \arrow[r] & \mathbb{N} \arrow[d, "n\mapsto t^n"] \\
    	A \arrow[r]             & {\mathbb{Z}[[t]]}                   
    \end{tikzcd}
    $$
    whose corresponding log scheme morphism is formally \'{e}tale.
    \end{rk}
    
     \begin{rk}\label{pi}
    	For $U\in X_{\et}$
    	and $T\in\LS_S$, there is a natural map $\pi: \Fra_U^\infty(T)\to U_T(T)$, induced by composing with the base point. 
    	This is intuitively the projection from the space of coordinates to the underlying points.
    	More precisely, 
    	a base point $b:T\to D_T$ is locally given by
    	$$
    	\begin{tikzcd}
    		{R[[t]]} \arrow[r]              & R           &  & \alpha_T(q)t^n \arrow[r, maps to]             & \alpha_T(q)0^n          \\
    		Q\oplus \bN \arrow[r] \arrow[u] & Q \arrow[u] &  & {(q,n)} \arrow[u, maps to] \arrow[r, maps to] & 0^nq \arrow[u, maps to]
    	\end{tikzcd}
        $$
        where $T$ is locally of the form $\Spec(Q\to R)$,
        and $\pi$ maps a local coordinate $D_T \to U_T$ to the composition
        $T\stackrel{b}{\to} D_T\to U_T$.
    \end{rk}

    \begin{rk}
    	Let $y$ be a geometric point of the underlying scheme of $S$.
    	Then the geometric fiber  $X_y$ of $X\to S$ at $y$
    	is reduced and connected by definition.
    	Moreover,
    	it is shown (\cite{fkato}-1.3)
    	that
    	$\ul{X}_y$ has at most ordinary double points,
    	i.e.,
    	it is either of the form
    	$\Spec k(y)[t]$ or 
    	$\Spec k(y)[u,v]/(uv)$
    	étale locally.
    	If a point $x\in\ul{X}_y$ is smooth in the usual sense,
    	then there is a Zariski open subset 
    	$U\subset \ul{X}_y$
    	containing $x$
    	with an étale morphism
    	$f:U\to \bA^1_{k(y)}$.
    	In this case a local coordinate is classically understood as
    	$$\ul{D_S}\to U\to 
    	\bA^1_{k(y)}$$
    	where the base point generated by $t$ in $\ul{D_S}$ is mapped to $x$.
    	Thus our definition of coordinates coincides with the usual one in \cite{frenkel-benzvi}.
    	On the other hand, 
    	if $x$ is the node on a geometric fiber of $X\to S$,
    	then we have a commutative diagram
    	$$
    	\begin{tikzcd}
    		D_S \arrow[d] \arrow[rd] &                               \\
    		U \arrow[r, "f"]         & {\Spec\frac{k(y)[u,v]}{(uv)}}
    	\end{tikzcd}
        $$
        with the two arrows forming the top side of the triangle formally étale.
        Intuitively,
        the classical coordinate is motivated by considering the affine line as a standard model,
        while in our setting we also allow 
        normal crossings,
        and together these constitute a new standard model for coordinates.
        Details about coordinates near nodes will be discussed in Section \ref{casei}.
    \end{rk}
    
    \begin{definition}
    	For a $D_T$-point $x$ on $X_T$, the \textit{space of local coordinates near} $x$ is defined to be  
        $$\Aut_x=\left\{z\in \Hom^{\fet}_T(D_T,X_T): 
        \begin{tikzcd}
        	D^\circ_T \arrow[r, "z^\circ"] \arrow[rd, "\theta_z"', dashed] & X_T                             \\
        	& D^\circ_T \arrow[u, "x^\circ"']
        \end{tikzcd}\ 
        \text{for some}\ 
        \theta_z\in \Aut^0_T(D^\circ_T)
        \right\}.$$
    \end{definition}
	
	From the definition of $\Aut_x(T)$, it is easy to see that it admits an $\Autu^0\cO(T)$-action by automorphisms of $D_T$,
	$$
	\begin{tikzcd}
		D_T \arrow[rd, "\text{formally \'{e}tale}"'] \arrow[r, "\sim"] & D_T \arrow[d, "\text{formally \'{e}tale}"] \\
		& X_T                                       
	\end{tikzcd}
	$$
	Each $\rho\in\Autu^0\cO(T)$ induces an $T$-automorphism of $D^\circ_T$ preserving the base points, 
	which we denote by the same symbol $\rho$ if there is no confusion.
	If $z$ is a given coordinate near $x$, then $z^\circ\circ \rho$ is a formally \'{e}tale morphism $D^\circ_T\to X_T$.
	Furthermore, it is clear that 
	$x^\circ\circ\theta_z\circ\rho=z^\circ\circ\rho$, and thus $\rho\cdot z$ is a coordinate near $x$.
	Therefore we have the following result.
	
	\begin{prop}\label{thm:coor}
		The action of $\Autu^0\cO(T)$ on $\Aut_x$ is simply transitive.
	\end{prop}

    In general, a morphism $\ul{D^\circ}\to \ul{X}$ cannot be \'{e}tale, but we can still define $\Fra_{D^\circ}^\infty$ in the same way,
    whose value at $T\in\LS_S$ is  $\Hom^{\fet}_T(D_T,D^\circ_T)$. 
    When $T=\Spec(0\to\bZ)$,
    it is easy to see that the point $(p)\in D_T$, 
    where $p$ is a prime number, must be mapped to $(p)\in D^\circ_T$ under a formally \'{e}tale morphism,
    and thus we have an isomorphism
    $\Hom^{\fet}_T(D_T,D^\circ_T)\simeq 
    \Hom^{\fet}_T(D_T^\circ,D^\circ_T)$.
    This intuitively inspires us to construct a torsor over $X$, 
    via the following result.
    
    \begin{prop}\label{p2}
    	There is an isomorphism of functors $\Fra^\infty_{D^\circ}\simeq\Autu^0\cO$.
    \end{prop}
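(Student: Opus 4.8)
The plan is to show that for $D^\circ$ the extra factorization condition in the definition of $\Fra_{D^\circ}^\infty$ is automatically satisfied, so that $\Fra_{D^\circ}^\infty(T)$ is just the set of formally \'{e}tale $T$-morphisms $D_T\to D^\circ_T$, and then to identify this set with $\Autu^0\cO(T)=\Aut^0_T(D_T)$ by post-composition with the tautological completion morphism. Base-changing the canonical morphism $D=\Spf\bZ[[t]]\to\Spec\bZ[[t]]=D^\circ$ (the identity of $\bZ[[t]]$ on rings, strict on log structures) gives for each $T$ a morphism $\iota\colon D_T\to D^\circ_T$. I would first record that $\iota$ is formally \'{e}tale: for a log thickening $T''\hookrightarrow T'$ over $\iota$ with $h\colon T'\to D^\circ_T$, any deformation $\tilde g\colon T'\to D_T$ must have $\tilde g^\sharp=h^\sharp$ (because $\iota^\sharp=\id$), so it is unique, and it exists because $h^\sharp(t)$ is nilpotent modulo the square-zero ideal of $T''$ in $T'$, hence nilpotent, so $h$ does factor through $D_T$. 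Thus $\iota$ is a $D_T$-point of $D^\circ_T$ with $\iota^\circ=\id_{D^\circ_T}$, and since any $z\in\Hom^{\fet}_T(D_T,D^\circ_T)$ satisfies the factorization condition with the choices $x=z$ and $\theta_z=\id_{D^\circ_T}$, one gets $\Fra_{D^\circ}^\infty(T)=\Hom^{\fet}_T(D_T,D^\circ_T)$. I would then define $\Phi\colon\Autu^0\cO\to\Fra_{D^\circ}^\infty$ by $\Phi_T(\rho)=\iota\circ\rho$, which is well defined and functorial in $T$ because $\iota$ is defined by base change.

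The core of the argument is that $\Phi$ is bijective on $T$-points. Injectivity is immediate: $\iota\circ\rho=\iota\circ\rho'$ forces $\rho^\sharp=(\rho')^\sharp$ since $\iota^\sharp=\id$, hence $\rho=\rho'$. For surjectivity I would classify the formally \'{e}tale log morphisms $z\colon D_T\to D^\circ_T$, working \'{e}tale-locally on $T$: with $T=\Spec R$ and a chart $Q\to R$, the schemes $D^\circ_T=\Spec R[[t]]$ and $D_T=\Spf R[[t]]$ carry charts subordinate to $Q\oplus\bN$ with $t$ the image of the generator of $\bN$, and since $z$ is log over $T$ that generator must map to a section of $M_{D_T}$, so $z^\sharp(t)=\alpha_T(q_0)\,t^{m}\,v(t)$ for some $q_0\in Q$, $m\in\bN$, $v\in R[[t]]^\times$. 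A direct infinitesimal-lifting computation against square-zero extensions --- using that $z$ is formally \'{e}tale --- forces $m=1$ and forces the constant term $w(0)$ of $w(t):=\alpha_T(q_0)v(t)$ to be a unit in $R$: linearizing the lifting constraint over a square-zero ideal, the relevant coefficient is congruent to $w(0)$ modulo nilpotents, so unique solvability across all such extensions is equivalent to $w(0)\in R^\times$. Hence $z^\sharp(t)=w(t)\,t$ with $w\in R[[t]]^\times$; the same power series defines an automorphism $\rho$ of $D_T$ that fixes the base point (as $z^\sharp(t)\in(t)$), so $\rho\in\Aut^0_T(D_T)$, and $\iota\circ\rho=z$ because both have $\sharp$-map $t\mapsto w(t)t$. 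Thus $\Phi_T$ is onto.

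Therefore $\Phi$ is an isomorphism of presheaves on $\LS_S$, and sheafifying gives $\Fra_{D^\circ}^\infty\simeq\Autu^0\cO$. I expect the main obstacle to be exactly the classification step in the second paragraph: it is where the logarithmic structure is indispensable, since without it a morphism $D^\circ\to X$ need not be \'{e}tale at all (the remark preceding the statement), and one must be careful that the log differentials do not by themselves suffice --- they are blind to ``toric'' ramification $t\mapsto\alpha_T(q_0)t$ with $\alpha_T(q_0)$ a non-unit, which is formally unramified but not formally \'{e}tale, so the unit-leading-coefficient condition genuinely requires the infinitesimal-lifting computation.
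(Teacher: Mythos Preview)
Your approach differs substantially from the paper's. The paper reduces immediately to $T=\Spec\bZ$ and exploits arithmetic features peculiar to $\bZ[[t]]$: since $t\pm1$ are units and every unit in $\bZ[[t]]$ has constant term $\pm1$, one deduces that the constant term $a_0$ of $\alpha^\sharp(t)$ vanishes; formal unramifiedness then forces $a_1=\pm1$; and an injective endomorphism of the noetherian domain $\bZ[[t]]$ is automatically surjective. You instead try to argue uniformly over an arbitrary $T$ via log charts and infinitesimal lifting.

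Your surjectivity step has a genuine gap. The claim that log formal \'etaleness of $z$ forces $m=1$ in $z^\sharp(t)=\alpha_T(q_0)\,t^m\,v(t)$ fails over general bases: the morphism with $z^\sharp(t)=t^m$ and chart map $n\mapsto mn$ on $\bN$ is a Kummer cover, and by Kato's criterion it is log \'etale---hence log formally \'etale---whenever $m\in R^\times$. Thus over $T=\Spec\bQ$ the map $t\mapsto t^2$ lies in $\Hom^\fet_T(D_T,D^\circ_T)$ but is not $\iota\circ\rho$ for any $\rho\in\Aut^0_T(D_T)$, so $\Phi_T$ is not onto. Your ``linearized lifting'' in fact computes the log Jacobian: from $d\log(z^\sharp(t))=\bigl(m+tw'(t)/w(t)\bigr)\,d\log t$ the relevant constant term is $m$, not $w(0)$, and its invertibility is the condition $m\in R^\times$, not $m=1$. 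Your closing remark anticipates the non-unit scalar $t\mapsto ct$ but misses this Kummer phenomenon. The paper's reduction to $\bZ$ is exactly what eliminates it, since $m\in\bZ^\times\cap\bN$ forces $m=1$; that reduction is the crux of the argument, not a convenience.
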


    \begin{proof}
    	Since any isomorphism is \'{e}tale, 
    	it suffices to show that any
    	$\alpha\in\Fra^\infty_{D^\circ}(\bZ)\simeq\Hom_\bZ^\fet(D^\circ,D^\circ)$ is an automorphism of $D^\circ$ preserving the base point $(t)$.
    	Let $\alpha^\sharp(t)=\sum_{i\geq0}a_it^i$. 
    	Since $\alpha^\sharp(t\pm1)=\alpha^\sharp(t)\pm1$ are units and the constant term of every unit in $\bZ[[t]]$ is $\pm1$, it follows that $a_0=0$.
    	Namely, $\alpha^\sharp(t)\in t\bZ[[t]]$. 
    	Moreover, $a_1=\pm1$, otherwise suppose that
    	there is some prime $p$ dividing $a_1$.
    	Now consider the base change $\alpha^\sharp_p$ of $\alpha^\sharp$
    	by the modulo map $\bZ\to\bF_p$,
    	then 
    	$\alpha^\sharp_p(t)\in t^2\bF_p[[t]]$,
    	and
    	$d\alpha^\sharp_p(t)=2a_2dt+O(t^2)dt\mod p$.
    	Then it is easy to see that 
    	$\alpha^\sharp_p$ is not formally unramified as $2a_2$ is always an even number and thus $dt\not=0$ in 
    	$\Omega^1_{\alpha_p}$.
    	Therefore $\alpha^\sharp$ will also not be unramified if
    	$a_1\not=\pm1$,
    	and hence
    	$\alpha^\sharp(t)=\pm t+a_2t^2+a_3t^3+...$
    	Thus $\alpha$ preserves the base point $(t)$.
    	Next,
    	we show that $\alpha$ is an automorphism. For $f=b_0+b_1t+...\in\bZ[[t]]$ such that $\alpha^\sharp(f)=b_0+b_1\alpha^\sharp(t)+b_2\alpha^\sharp(t)^2+...=0$, obviously $b_0=0$. Then $\alpha^\sharp(t)(b_1+b_2\alpha^\sharp(t)+...)=0$. Since $\bZ[[t]]$ is an integral domain, it derives that $b_1+b_2\alpha^\sharp(t)+...=0$; therefore $b_1=0$. 
    	Hence $\alpha^\sharp(t)(b_2+b_3\alpha^\sharp(t)+...)=0$. Inductively all $b_i=0$, so $f=0$, and thus $\alpha^\sharp$ is an injective endomorphism.
    	Moreover, since $\bZ[[t]]$ is a noetherian ring, it follows that $\alpha^\sharp$ is an automorphism
    	as injective endomorphisms of noetherian rings are automorphisms. 
    	Now we fit this into the log framework.
    	For a natural number $m$, we have that $\alpha^\sharp(t)^m=\sum a_{i_1}\cdots a_{i_m}t^{i_1+...+i_m}=t^mh(t)$
    	for some unique $h(t)\in\bZ[[t]]^*$.
    	The associated monoid isomorphism
    	$\bN\oplus\bZ[[t]]^*\to \bN\oplus\bZ[[t]]^*$
    	is given by
    	$\alpha^\flat(m,f(t))=(m,f(\alpha^\sharp(t))h(t))$ with $h(t)=t^{-m}\alpha^\sharp(t)$.
    	This completes the proof.
    \end{proof}

    For a $D_T$-point $x:D_T\to X_T$ over $T$, we sometimes denote $x$ by $D_x$,
    and let $\Fra^\infty_{D_x}$ denote the space $\Fra_D^\infty(T)$ with respect to $x$.
    More concretely,
    if we 
    suppose $x$ maps $D_T$
    into an open subset $U_T$ of $X_T$,
    then the map 
    $\Fra_U^\infty(T)\to U_T(T)$
    from Remark \ref{pi} is decomposed into
    $$
    \begin{tikzcd}
    	\Fra_U^\infty(T) \arrow[r] \arrow[d] & U_T(T)           \\
    	\Fra_D^\infty(T) \arrow[r]           & D_T(T) \arrow[u]
    \end{tikzcd}
    $$
    such that the right-hand side vertical morphism is induced by $x$.
    From the above proposition, we know that $\Fra^\infty_{D_x}$ admits a simply transitive $\Autu^0\cO(T)$-action as well. 
    Furthermore
    $\Fra^\infty_{D_x}\simeq \Fra^\infty_D(T)$ 
    carries an affine scheme structure over $T$. Explicitly,
    it follows from the proof of Proposition \ref{p2} that
    \begin{equation*}
    	\begin{aligned}
    		\Fra^\infty_{D_x}\simeq\Fra^\infty_D(T)&\simeq\underline{\Spec}_T\cO_T[a_1,a_1^{-1},a_2,a_3,...]\\
    		&=\dlim_{N}\underline{\Spec}_T\cO_T[a_1,a_1^{-1},a_2,...,a_N]
    	\end{aligned}
    \end{equation*}
    is a proalgebraic group over $T$,
    where $a_i$'s can be considered as coefficients of the coordinate transformation series in
    $\Autu^0\cO(T)$.
    This inspires us to consider $\Aut_x$ as a trivial torsor $\Autu^0\cO(T)\times D_x\simeq \Fra^\infty_{D_x}\times D_x$ over $D_x$.

    \section{Vertex algebra bundles and conformal blocks}\label{vacb}
    
    \subsection{Sheaf of vertex algebras}
    
    Let $V$ be a conformal vertex algebra bounded from below, 
    and let $X$ be a log curve over $S$.
    The \textit{vertex algebra bundle} $\cV$ \textit{associated to} $V$, is a sheaf of functors from $\LS_S$ to $\mathbf{Set}$ on $X_\et$,
    defined by
    $$\cV:=(V\otimes\pi_*\cO_{\Fra^\infty_X})^{\Autu^0\cO},$$
    where $\pi$ is defined in Remark  \ref{pi}. 
    Concretely, for $T\in\LS_S$, 
    locally on $U\in X_\et$,
    the space $\cV_T(U_T)$ is given by $$\cV_T(U_T):=((V\otimes\cO_T(T))
    \otimes\pi_*\cO_{\Fra^\infty_X(T)}(U_T))^{\Autu^0\cO(T)}.$$
    This suggests that $\cV_T$ can be viewed as a sheaf of vertex algebras on  $(X_T)_{\et}$.
    We provide an explicit description of the $\Autu^0\cO(T)$-action on $V\otimes\cO_T(T)$. Recall that an element of $\Autu^0\cO(T)$ can be written as a formal power series
    $z\mapsto f(z)=a_1z+a_2z^2+...$ where $a_1\in\cO_T(T)$ is a unit. 
    Given such an $f(z)$, 
    we can find $v_i\in \cO_T(T)$, $i\geq0$ such that
    $$f(z)=\exp\left(\sum_{i\geq0}v_iz^{i+1}\partial_z\right)v_0^{z\partial_z}\cdot z.$$ 
    We note that $z\partial_z\cdot z=z$, 
    and so $v_0^{z\partial_z}\cdot z=v_0z$.
    One has
    $$\exp\left(\sum_{i\geq0}v_iz^{i+1}\partial_z\right)v_0z=v_0z+\sum_{i>0}v_iv_0z^{i+1}+\frac{1}{2}\left(\sum_{i\geq0}v_iz^{i+1}\partial_z\right)\cdot\sum_{i>0}v_iv_0z^{i+1}+...$$
    By comparing the coefficients, 
    we obtain all $v_i$, $i\geq0$ from the system of equations
    \begin{equation*}
    	\begin{aligned}
    		v_0&=a_1\\
    		v_1v_0&=a_2\\
    		v_2v_0+v_1^2v_0&=a_3,\quad \text{etc.}
    	\end{aligned}
    \end{equation*}
    Then the action of $f(z)$ on $V\otimes\cO_T(T)$ is defined to be the linear operator
    \begin{equation}\label{action}
    	\exp\left(-\sum_{j>0}L_j\otimes v_j\right)(\id_V\otimes v_0^{-L_0}).
    \end{equation}
    We note that the action of $\id_V\otimes v_0^{-L_0}$ is given by
    $$(\id_V\otimes v_0^{-L_0})(v\otimes r)=v\otimes v_0^{-m}r$$
    for $v\in V_m$, $r\in\cO_T(T)$.
    The above operator is well-defined from the assumption that $V$ is bounded from below.

    Let $x:D_T\to X_T$ be a $D_T$-point over $T$.
    Recall that we have a trivial $\Autu^0\cO(T)$-torsor $\Aut_x$ on $D_x$. 
    Thus restricting it to $D_x^\times$, 
    we obtain a trivial $\Autu^0\cO(T)$-torsor on $D_x^\times$.
    We define the 
    \textit{fiber} $\cV_x$  
    \textit{of the vertex algebra bundle} 
    $\cV$  {over} $D_x$ associated to $V$,
    to be a sheaf over $D_T$,
    $$\cV_x=((V\otimes\cO_T(T))\otimes\pi_*\cO_{\Fra_{D_x}^\infty(T)})^{\Autu^0\cO(T)}.$$
    \begin{rk}
    	It follows that the space of global sections of $\cV_x$ is
    	$$((V\otimes\cO_T(T))\otimes\pi_*\cO_{\Fra^\infty_{D_x}(T)}(D_T))^{\Autu^0\cO(T)}
    	\simeq
    	((V\otimes\cO_T(T))\otimes\Aut_x)^{\Autu^0\cO(T)}.
    	$$
    	From Proposition \ref{thm:coor},
    	we see that if a coordinate is fixed,
    	then $\cV_x(D_T)$
    	can be identified with 
    	$V\otimes\cO_T(T)$.
    	This implies that the fibers of $\cV$ are all isomorphic to the vertex algebra $V$.
    \end{rk}
    
    \begin{definition}\label{connection}
    	A \textit{connection} on $\cV$ is a linear morphism
    	$$\nabla:\cV\to\cV\otimes\Omega_{X/S}^1$$
    	which, 
    	locally over $U\in X_\et$ and for $T\in\LS_S$
    	is given by a morphism
    	$$\nabla_T(U_T):\cV_T(U_T)\to\cV_T(U_T)\otimes\Omega_{X_T/T}^1(U_T)
    	$$
    	satisfies the following conditions:
    	\\ 
    	1)  $\nabla_T(U_T)$ is $\cO_T(T)$-linear;
    	\\
    	2) $\nabla_T(U_T)(f\cdot (v\otimes z))=v\otimes zdf+f\nabla_T(U_T)(v\otimes z)$ for $f\in\cO_{X_T}(U_T)$, 
    	where
    	$z$ is a local coordinate and
    	$d:\cO_{X_T}\to\Omega^1_{X_T/T}$ is the log derivative.
    \end{definition}
    
     For a coordinate $z\in\Aut_x(T)$, suppose that it factors through some $U_T\in(X_T)_\et$, i.e., there is a commutative diagram
     $$
     \begin{tikzcd}
     	D_T \arrow[rd, dashed] \arrow[r, "z"] \arrow[d] & X_T           \\
     	D^\circ_T \arrow[r]                             & U_T \arrow[u]
     \end{tikzcd}
     $$
     Then $L_{-1}\otimes\id+\id\otimes\partial_z\in\End \cV_T(U_T)$ locally describes a connection on $\cV_T$ near $x$, 
     which we denote by $\nabla_T$. Explicitly, it is given by 
     (\cite{damiolini}-2.7)
     $$\nabla_T(U_T)(v\otimes z^n)=L_{-1}v\otimes z^ndz+nv\otimes z^{n-1}dz.$$ 
     
     \begin{rk}\label{section}
     	As we noted previously, maps $D^\times\to X$, $D^\circ\to X$ and $D\to X$ cannot be \'{e}tale in general,
     	but differentials on these discs induced from that on $X$ still work for our construction.
     	For a sheaf $\cF$ on $(X_T)_{\et}$, we denote by $\Gamma(D_T^\times, \cF)$ 
     	the space of global sections of the pull-back of $\cF$ over $D_T^\times$.
     \end{rk}

     Recall in Theorem 
     \ref{logder} that
     the sheaf of log differentials 
     $\Omega^1_{D_T^\times/T}\simeq\cO_{D_T^\times}\otimes M_{D_T^\times}^{\operatorname{gp}}/(R_1+R_2)$ is generated as an $\cO_{D_T^\times}$-module by the image of $d:M_{D_T^\times}\to \Omega^1_{D_T^\times/T}$.
     So a section of $\cV_{D_x^\times}\otimes\Omega^1_{D_x^\times/T}$ can be written as $(v\otimes z)\otimes fdm$, where $v\in V$, $z\in\Aut_x$, $f\in\Gamma(D_T^\times,\cO_{X_T})$, $m\in \Gamma(D_T^\times,M_{X_T})$. 
     Indeed
     the fixed coordinate $z$, 
     locally on $\ul{T}=\Spec A$, 
     gives rise to an isomorphism $\ul{D^\times_T}\simeq\Spec A((z))$, 
     and then $f\in A((z))$. 
     Let $Q\to M_X(X)$ be a chart of $(X,M_X)$. 
     Since $(X,M_X)$ is fs, we suppose that $Q$ is generated by $e_1,..,e_n$.
     
     \begin{definition}
     	Let $T\in\LS_S$ and let $x$ be a $D_T$-point of $X_T$.
     	With respect to a fixed coordinate $z$,
     	the \textit{vertex operation} $(\cY^\vee_x)_T$ \textit{associated to} 
     	$Y: V\to\End V[[z^{\pm1}]]$ is defined to be the $\cO_T(T)$-linear map 
     	$$(\cY^\vee_x)_T:\Gamma(D_x^\times,\cV_{T}\otimes\Omega^1_{X_T/T})
     	\to\End\cV_x(D_T)$$
     	$$(v\otimes z)\otimes z^rd\bar{e}_i\mapsto \sum_{n\in\bZ}(e_i)_nv_{n+r}\otimes z$$
     	where $\bar{e}_i$ is the image of $e_i$ under the composition $Q\to M_X(X)\to M_{X_T}(X_T)\to \Gamma(D_T^\times,M_{X_T})$, and $d\bar{e}_i=\sum_n(e_i)_nz^ndz\in\Omega^1_{D^\times_T/T}(D_T^\times)$ with $(e_i)_n\in \cO_T(T)$. 
     \end{definition}
     
     After fixing a coordinate $z$, we observe that
     $$(\cY^\vee_x)_T
     (L_{-1}v\otimes z^rdz
     +v\otimes rz^{r-1}dz)
     =
     (L_{-1}v)_r+rv_{r-1}.$$
     It follows immediately from 
     the following Proposition \ref{vanish} that
     the above expression is $0$.
     Therefore $(\cY^\vee_x)_T$ vanishes on the \textit{total derivatives} $\nabla_T\Gamma(D_x^\times,\cV_{T})$ and hence factors through 
     $\Gamma(D_x^\times,\cV_{T}\otimes\Omega^1_{X_T/T})/\nabla_T\Gamma(D_x^\times,\cV_{T})$.
     
     \begin{prop}[\cite{frenkel-benzvi}-3.1.6]\label{vanish}
     	For all $a\in V$,
     	$Y(L_{-1}a,z)=\partial_z Y(a,z)$.
     \end{prop}
     
     The proof is not difficult but we will not use it in the sequel,
     and so is omitted.
    
    \subsection{Conformal blocks}
    
    To describe the space of conformal blocks, 
    we first construct the Lie algebra associated with the punctured space of $X$, which acts on the inserted modules at the framed points.
    
    With respect to a $D_T$-point $x:D_T\to X_T$, 
    the \textit{punctured space of} $X_T$ is defined to be the subscheme ${\mathop{X}\limits^{\circ}}_T=X_T-\im x$ of $X_T$,
    which is called the $x$-\textit{punctured subscheme of} $X_T$ in the present paper. 
    The \'{e}tale topology on ${\mathop{X}\limits^{\circ}}_T$ arises from the follows:
    \\ \hspace*{\fill} \\
    $\bullet$ Let $\{f_i:U_i\to X_T\}$ be an \'{e}tale covering of $X_T$, 
    and then $\left\{f_i\big|_{f_i^{-1}({\mathop{X}\limits^{\circ}}_T)}:f_i^{-1}({\mathop{X}\limits^{\circ}}_T)\to {\mathop{X}\limits^{\circ}}_T\right\}$ is an \'{e}tale covering of ${\mathop{X}\limits^{\circ}}_T$.
    \\ \hspace*{\fill} \\
    In the case of puncturing multiple points $x_1,...,x_n:D_T\to X_T$, we set $\circX_j$ as $x_j$-punctured subscheme of $X_T$, 
    and then the \'{e}tale covering of ${\mathop{X}\limits^{\circ}}_T$ associated to $\{f_i:U_i\to X_T\}$ is defined as $$\left\{f_i\big|_{f_i^{-1}(\circX_1\cap...\cap\circX_n)}:f_i^{-1}(\circX_T)\to\circX_T\right\}.$$

    As we stated at the beginning, 
    our aim now is to define a Lie algebra
    (proved later in Proposition \ref{lieD}) 
    associated with the vertex algebra $V$, 
    which acts on the sheaf of $V$-modules. 
    Prior to this, 
    we introduce a space attached to the punctured curve $\circX$, 
    through which the Lie algebra acts.
    
    \begin{definition}
    	In the case that $\circX$ is obtained by removing exactly one framed point,
    	i.e.,
    	${\mathop{X}\limits^{\circ}}_T=X_T-\im x$
    	for a $D_T$-point $x$,
    	define 
    	$$\Lie_\circX(V)_T:=\frac{\cV_T(\circX_T)\otimes\Omega_{X_T/T}^1(\circX_T)}{\nabla_T\cV_T(\circX_T)}.$$
    \end{definition}

    \begin{rk}
    	The action of $\nabla_T$ on $\cV_x$ is induced by its action on $\cV_T(X_T)$,
    	which is of the same form
    	$L_{-1}\otimes\id+\id\otimes \partial_z$
    	for a fixed coordinate $z$ at $x$.
    	Indeed the connection $\nabla_T$ gives rise to a morphism
    	$$\cV_x\to\cV_x\otimes\Omega_{D_T /T}^1$$
    	which satisfies the conditions in Definition \ref{connection}.
    	If we denote by $i:D^\times_T\rightarrow D_T$ the natural morphism
    	whose associated homomorphism of complete rings is the inclusion
    	$\cO_T[[t]]\hookrightarrow \cO_T((t))$
    	and the morphism of underlying topological spaces is the inclusion map,
    	then for a $D_T$-point $x$ on $X_T$ (often denoted by $D_x$),
    	we have a morphism $\hat{x}:D_T^\times \to X_T$
    	by composing $x:D_T\to X_T$ with $i$.
    	Let $D_x^\times$ denote the above morphism $\hat{x}$.
    	Since $\cV_x$ is a vertex algebra bundle over $D_T$ associated to $D_x$,
    	its pullback
    	$\cV_{\hat{x}}:=i^*\cV_x$
    	is a vertex algebra bundle over $D^\times_T$
    	associated to $V$.
    	Therefore we yield a morphism 
    	$$\cV_{\hat{x}}\to \cV_{\hat{x}}\otimes\Omega_{D_T^\times /T}^1$$
    	induced by $\nabla_T$.
    	We are now in a position to define
    	$$\Lie_{D^\times}(V)_T=
    	\frac{\cV_{\hat{x}}(D_T^\times)\otimes\Omega_{D^\times_T /T}^1(D_T^\times)}{\nabla_T\cV_{\hat{x}}(D_T^\times)}.$$
    \end{rk}

    The morphism $x^\circ:D^\circ_T\to X_T$ associated to $x$ admits a lifting $\hat{x}^\circ:D^\times_T\to \circX_T$ such that the diagram
    $$
    \begin{tikzcd}
    	D^\times_T \arrow[d, hook] \arrow[r, "\hat{x}^\circ"] & {\mathop{X}\limits^{\circ}}_T \arrow[d] \\
    	D^\circ_T \arrow[r, "x^\circ"]                        & X_T                                    
    \end{tikzcd}
    $$
    commutes. 
    We then pull $\Lie_\circX(V)_T$ back through $\hat{x}^\circ$ to $D_T^\times$,
    identified with the image of the restriction map
    \begin{equation}\label{liemap}
    	\Lie_\circX(V)_T\to\Lie_{D^\times}(V)_T
    \end{equation}
    which is denoted by $\hat{x}^*\Lie_\circX(V)_T$.
    Concretely, the above restriction map can be described by the following two maps:
    \\
    (1) $\cV_T(\circX_T)\to\cV_{\hat{x}}(D^\times_T)$ given by taking the stalk of a section near the punctured point $x$.
    Specifically 
    it is induced by the map
    $$\pi_*\cO_{\Fra^\infty_{X}(T)}(\circX_T)\to
    i^*\pi_*\cO_{\Fra^\infty_{D_x}(T)}(D^\times_T),\quad
    \xi\mapsto (\hat{x}^\circ)^\sharp\xi;$$
    (2) $\Omega_{X_T/T}^1(\circX_T)\to
    \Omega_{D^\times_T /T}^1(D_T^\times)$ induced by
    $(\hat{x}^\circ)^*\Omega_{X_T/T}^1\to
    \Omega_{D^\times_T /T}^1$.
    
    \begin{prop}\label{lieD}
    	There is a natural Lie algebra structure on 
    	$\Lie_{D^\times}(V)_T$.
    \end{prop}

    \begin{proof}
    	The proof is based on \cite{frenkel-benzvi}-4.1.2.
    	By definition, 
    	we have
    	\begin{equation*}
    		\begin{aligned}
    			\cV_{{\hat{x}}}(D^\times_T)
    			=i^*\cV_x(D^\times_T)
    			&=\cV_x(D_T)\otimes_{\cO_T(D_T)}\cO_{D^\times_T}(D^\times_T)
    			\\
    			&
    			\simeq((V\otimes\cO_T(T))\otimes\Aut_x(T))^{\Aut^0\cO(T)}\otimes
    			_{\cO_T(D_T)}\cO_{D^\times_T}(D^\times_T).
    		\end{aligned}
    	\end{equation*}
        If we fix a coordinate $z$,
        then it follows that there are $\cO_T(T)$-module isomorphisms
        $$\cV_{{\hat{x}}}(D^\times_T)\otimes\Omega_{D^\times_T /T}^1(D^\times_T)
        \simeq
        V\otimes\cO_T(T)((z))dz
        \simeq V\otimes\cO_T(T)((z))
        $$
        where the latter isomorphism is induced by the valuation of a differential at $\partial_z$.
        Then the connection $\nabla_T$ near $D_x$
        induces an action of the form 
        $L_{-1}\otimes\id +\id\otimes\partial_z$
        on $V\otimes\cO_T(T)((z))$.
        Thus we have
        $$\Lie_{D^\times}(V)_T
        \simeq
        \frac{V\otimes\cO_T(T)((z))}{\im\nabla_T}.$$
        Observe that the commutative $\cO_T(T)$-algebra $\cO_T(T)[z^{\pm1}]$
        carries a vertex algebra structure
        by Example \ref{comm}.
        Therefore we obtain that the tensor product of $V$ and $\cO_T(T)[z^{\pm1}]$ carries a vertex algebra
        structure
        by Proposition \ref{tensor}.
        By the following Lemma
        \ref{fb413},
        it follows that
        $$
        U(V\otimes\cO_T[z^{\pm1}])_0=
        \frac{V\otimes\cO_T(T)[z^{\pm1}]}{\im(\nabla_T|_{V\otimes\cO_T(T)[z^{\pm1}]})}
        $$
        admits a Lie algebra structure.
        Moreover, 
        the Lie algebra $U(V\otimes\cO_T[z^{\pm1}])_0$
        is a topological Lie algebra
        where the basis of open neighborhoods of $0$ consists of the subspaces indexed by $M>0$ spanned by
        $\sum_{n\geq M}f_nA_{[n]}$, for $f_n\in\cO_T(T)$, $v\in V$.
        The Lie bracket (Remark \ref{jacobi}) on $U(V\otimes\cO_T[z^{\pm1}])_0$
        induced by
        \begin{equation}\label{lieDeq}
        	[A\otimes z^m,B\otimes z^k]=\sum_{n\geq0}
        	\left(
        	\begin{matrix}
        		m\\
        		n
        	\end{matrix}\right)
        	(A_{n}B)\otimes z^{m+k-n}
        \end{equation}
        is continuous,
        and 
        ${V\otimes\cO_T(T)((z))}/{\im\nabla_T}$
        is the completion of
        $U(V\otimes\cO_T[z^{\pm1}])_0$
        with respect to this topology.
        This allows us to extend the Lie bracket to $\Lie_{D^\times}(V)_T$
        by continuity.
        Thus we obtain a Lie algebra structure on $\Lie_{D^\times}(V)_T$.
    \end{proof}

    \begin{lem}[\cite{frenkel-benzvi}-4.1.3]\label{fb413}
        Let $(V,\0,T,Y)$ be a vertex algebra,
        $U(V)_0$ be the quotient $V/\im T$
        and denote the image of $A\in V$ in $U(V)_0$ by $A_{[0]}$. 
        Then the linear map $U(V)_0^{\otimes 2}\to U(V)_0$
        defined by 
        $$A_{[0]}\otimes B_{[0]}\mapsto
        (A_{(0)}B)_{[0]}$$
        defines a Lie algebra structure on
        $U(V)_0$.
    \end{lem}

    \begin{proof}
    	From the following skew-symmetry formula,
    	it follows that
    	$$A_{0}B=-B_{0}A 
    	\mod T.$$
    	This implies the anti-commutativity
    	and
    	alternating property.
    	To prove the Jacobi identity,
    	let $A,B,C\in V$,
    	then one has that
    	\begin{equation*}
    		\begin{aligned}
    			&[[A_{[0]},B_{[0]}],C_{[0]}]+[[C_{[0]},A_{[0]}],B_{[0]}]+[[B_{[0]},C_{[0]}],A_{[0]}]
    			\\
    			&=
    			[(A_0B)_{[0]},C_{[0]}]+[(C_0A)_{[0]},B_{[0]}]+[(B_0C)_{[0]},A_{[0]}]
    			\\
    			&=((A_0B)_0C)_{[0]}+((C_0A)_0B)_{[0]}+((B_0C)_0A)_{[0]}
    			\\
    			&=([A_0,B_0]C)_{[0]}+([C_0,A_0]B)_{[0]}+([B_0,C_0]A)_{[0]}
    			\\
    			&=(A_0B_0C)_{[0]}-(B_0A_0C)_{[0]}+(C_0A_0B)_{[0]}-(A_0C_0B)_{[0]}+(B_0C_0A)_{[0]}-(C_0A_0B)_{[0]}
    			\\
    			&=2((A_0B_0C)_{[0]}+(C_0A_0B)_{[0]}+(B_0C_0A)_{[0]}).
    		\end{aligned}
    	\end{equation*}
        where the third equality follows from
        $[A_0,B_0]=(A_0B)_0$ 
        (by Remark \ref{jacobi}) and the last equality follows from
        $A_0B=-B_0A\mod T$.
        Moreover, it follows that
        \begin{equation*}
        	\begin{aligned}
        		&(A_0B_0C)_{[0]}+(C_0A_0B)_{[0]}+(B_0C_0A)_{[0]}
        		\\
        		&=[A_{[0]},(B_0C)_{[0]}]+[B_{[0]},(C_0A)_{[0]}]+[C_{[0]},(A_0B)_{[0]}]
        		\\
        		&=[A_{[0]},[B_{[0]},C_{[0]}]]+[B_{[0]},[C_{[0]},A_{[0]}]]+[C_{[0]},[A_{[0]},B_{[0]}]]
        		\\
        		&=-([[A_{[0]},B_{[0]}],C_{[0]}]+[[C_{[0]},A_{[0]}],B_{[0]}]+[[B_{[0]},C_{[0]}],A_{[0]}])
        	\end{aligned}
        \end{equation*}
        Combining the above two equations,
        we obtain 
        $$[[A_{[0]},B_{[0]}],C_{[0]}]+[[C_{[0]},A_{[0]}],B_{[0]}]+[[B_{[0]},C_{[0]}],A_{[0]}]=0$$
        which is the Jacobi identity.
        This completes the proof.
    \end{proof}
    
    \begin{prop}[\textbf{Skew-symmetry} \cite{frenkel-benzvi}-3.2.5]
    	Let $V$ be a vertex algebra with translation operator $T$.
    	Then
    	$$Y(A,z)B=e^{zT}Y(B,-z)A$$
    	for $A,B\in V$.
    \end{prop}
    
    The proof is not particularly difficult but will not be reproduced here,
    as we do not need the details.
    
    \begin{cor}
    	$\hat{x}^*\Lie_{\circX}(V)_T$
    	is a Lie subalgebra of $\Lie_{D^\times}(V)_T$.
    \end{cor}

    \begin{proof}
    	This immediately follows from that the image of $(\hat{x}^\circ)^*\Omega_{X_T/T}^1\to
    	\Omega_{D^\times_T /T}^1$ 
    	consists of finite series
        and 
        the space of such series is closed under the Lie bracket (\ref{lieDeq}).
    \end{proof}
    
    \begin{definition}
    	Let $M$ be a conformal $V$-module
    	on which the eigenvalues of $L_0^M$ are integers.
    	Define the sheaf $\cM$ of functors from $\LS_S$ to $\mathbf{Set}$,
    	on $X_\et$ 
    	{associated to} $M$
    	by
    	$$\cM(U):=(M\otimes\pi_*\cO_{\Fra^\infty_X}(U))^{\Autu^0\cO}$$
    	whose value at $T\in\LS_S$ is
    	$$\cM_T(U_T)=((M\otimes\cO_T(T))\otimes\pi_*\cO_{\Fra^\infty_X(T)}(U_T))^{\Autu^0\cO(T)}.$$
    	Similar to what we have done for conformal vertex algebras, 
    	the fiber $(\cM_T)_x$ at $x$ is defined to be
    	$$\cM_x:=((M\otimes\cO_T(T))\otimes\Aut_x(T))^{\Autu^0\cO(T)}.$$
    \end{definition}
    
    \begin{rk}
    	We explain why we assume that the eigenvalues of $L_0$
    	on $M$ are integers.
    	Recall in formula (\ref{action})
    	that there appears a term 
    	$v_0^{-L_0}$
    	acting on $M$
    	by
    	$v_0^{-L_0}m=v_0^{-l}m$
    	where $L_0\cdot m=lm$.
    	If $l$ is not an integer,
    	then the scalar $v_0^{-l}$ is not necessarily well defined.
    	For instance,
    	if the base scheme has underlying scheme
    	$\Spec\bC$,
    	then a complex number may have multiple roots,
    	and thus
    	$v_0^{-l}m$ is no longer well-defined.
    \end{rk}
    
    \begin{rk}
    	Recall that
    	$\cV_x=((V\otimes\cO_T(T))\otimes\pi_*\cO_{\Fra^\infty_{D_x}(T)})^{\Autu^0\cO(T)}$
    	is a sheaf on $D_T$
    	centered at $x$,
    	and its space of global sections is
    	\begin{equation*}
    		\begin{aligned}
    			\cV_x(D_T)&=
    			((V\otimes\cO_T(T))\otimes\pi_*\cO_{\Fra_{D_x}^\infty(T)}(D_T))^{\Autu^0\cO(T)}
    			\\
    			&=((V\otimes\cO_T(T))\otimes\Aut_x(T))^{\Autu^0\cO(T)}
    		\end{aligned}
    	\end{equation*}
        Since in the sequel any sheaf associated to a $V$-module $M$
        only plays its role locally,
        i.e., as a fiber,
        so for ease of notation we define $\cM_x$
        to be the space of sections rather than a sheaf.
    \end{rk}
    
    In the 1-point case, 
    the fiber $\cM_x$ admits an action of $\Lie_\circX(V)_T$ through $(\cY_{M,x}^\vee)_T$ associated to the vertex operation $Y_M: V\to\End M[[z^{\pm1}]]$. 
    Namely, the $\Lie_{\circX}(V)_T$-action is given by
    $$\hat{x}^*\Lie_\circX(V)_T=
    \im\left(
    \Lie_\circX(V)_T\to\Lie_{D^\times}(V)_T\right)
    \stackrel{(\cY_{M,x}^\vee)_T}{\longrightarrow}\End\cM_x.$$
    In the rest of the present paper,
    for the base change scheme $T\in\LS_S$,
    by an $\cO_T$-module,
    we mean an $\cO_T(T)$-module when the target algebraic structure does not depend on the topology.
    
    \begin{definition}
    	The \textit{space of coinvariants} (or \textit{space of covacua}) $H_V(X,x,M)_T$ is defined to be the $\cO_T$-module  $$\cM_x/\Lie_\circX(V)_T\cdot\cM_x.$$
    	Its dual, the \textit{space of conformal blocks} $C_V(X,x,M)_T$, is defined to be  $$\Hom_{\Lie_\circX(V)_T}(\cM_x,\cO_T(T)).
    	$$
    \end{definition}
    
    In the case of multiple $D_T$-points $x_1,...,x_n$, with an $n$-tuple $M_1,...,M_n$ of conformal $V$-modules attached to these points respectively, 
    let $\circX=X-\{\im x_1,...,\im x_n\}$.
    The $\Lie_\circX(V)_T$-action on $\bigotimes_i\cM_{x_i}$ is induced by 
    \begin{equation*}
    	\begin{aligned}
    		\Lie_\circX(V)_T=
    		\frac{\cV_T(\circX_T)\otimes\Omega_{X_T/T}^1(\circX_T)}{\nabla_T\cV_T(\circX_T)}&
    		\stackrel{}{\to}
    		\bigoplus_{i=1}^n\Lie_{D^\times}(V)_T=
    		\bigoplus_{i=1}^n
    		\frac{\cV_{{\hat{x}_i}}(D_T^\times)\otimes\Omega_{D^\times_T /T}^1(D_T^\times)}{\nabla_T\cV_{{\hat{x}_i}}(D_T^\times)}
    		\\
    		v\otimes\mu&\mapsto \left(v\otimes\mu|_{D_{x_1}^\times}
    		,...,
    		v\otimes\mu|_{D_{x_n}^\times}\right)
    	\end{aligned}
    \end{equation*}
    from the map (\ref{liemap}),
    and 
    $$\bigotimes_i(\cY^\vee_{M_i,x_i})_T:
    \bigoplus_i\Lie_{D^\times}(V)_T
    \to\bigotimes_i\End(\cM_i)_{x_i}$$ 
    where
    $\mu|_{D_{x_i}^\times}$ ($i=1,...,n$)
    denotes the pull-back of $\mu$ via
    $D_{x_i}^\times\to \circX_T$.
    If we denote
    the image of the above map
    $\Lie_\circX(V)_T
    \stackrel{}{\to}
    \bigoplus_i\Lie_{D^\times}(V)_T$
    by
    $\hat{x}_\circ^*\Lie_\circX(V)_T$,
    then as a consequence of the one-point case,
    the vertex operation
    $\bigotimes_i(\cY^\vee_{M_i,x_i})_T$
    vanishes on the total differentials and thus induces an action of $\hat{x}_\circ^*\Lie_\circX(V)_T$ on
    $\otimes_i\cM_{x_i}$.
    Likewise, the \textit{space of coinvariants} is defined to be the $\cO_T$-module  $$H_V(X,(x_i),(M_i))_T:=\bigotimes_i(\cM_i)_{x_i}/\Lie_\circX(V)_T\cdot\bigotimes_i(\cM_i)_{x_i}.$$
    Explicitly, for $\sigma\in\Lie_{\circX}(V)_T$ and $\bigotimes_i m_i\in\bigotimes_i(\cM_i)_{x_i}$, 
    the associated action is given as
    $$\sigma\cdot \bigotimes_i m_i=\sum_im_1\otimes\cdots\otimes (\cY^\vee_{x_i})_T(\sigma|_{D_{x_i}^\times})m_i\otimes\cdots\otimes m_n.$$
    The associated \textit{space of conformal blocks} is defined to be the dual of the space of coinvariants $$C_V(X,(x_i),(M_i))_T:=\Hom_{\Lie_\circX(V)_T}\left(\bigotimes_i(\cM_i)_{x_i},\cO_T(T)\right).$$
    
    We now turn to the functoriality of conformal blocks.
    For morphisms $M_i'\to M_i$, $i=1,...,n$ of conformal $V$-modules,
    it is straightforward to see that there is a corresponding map of spaces of conformal blocks
    $$C_V(X,(x_i),(M_i))_T\to C_V(X,(x_i),(M_i'))_T.$$
    In addition, 
    the space of conformal blocks is also functorial in the underlying vertex algebra.
    
    \begin{prop}
    	Let $V\to W$ be a homomorphism of conformal vertex algebras. Then for any $n$-tuple of conformal $W$-modules $(M_i)_{i=1,...,n}$, there is a natural morphism
    	$$C_W(X,(x_i),(M_i))_T\to C_V(X,(x_i),(M_i))_T.$$
    \end{prop}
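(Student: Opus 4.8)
The plan is to deduce the statement from the \emph{functoriality of the entire construction in the vertex algebra}, and then to dualize a surjection of spaces of coinvariants. Write $\varphi\colon V\to W$ for the given homomorphism. Since a homomorphism of conformal vertex algebras carries conformal vector to conformal vector, $\varphi$ intertwines the Virasoro generators, $\varphi L^V_n=L^W_n\varphi$ for all $n$; in particular $\varphi T_V=T_W$, and $\varphi$ is equivariant for the actions of $L_0$ and of $L_n$ ($n>0$), hence for the induced $\Autu^0\cO$-actions on $V$ and $W$.

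First I would record that $\varphi\otimes\id$ commutes with the formation of $\Autu^0\cO$-invariants, so it induces a morphism of sheaves $\cV\to\cW$ on $X_\et$, compatible with base change to $T\in\LS_S$ and, because $\varphi T_V=T_W$ and the connection is $L_{-1}\otimes\id+\id\otimes\partial_z$ in a coordinate, with the connections $\nabla_T$. Tensoring with $\Omega^1$ over $\circX$ and passing to the quotient by $\nabla_T$ therefore produces an $\cO_T$-linear map $\Lie_\circX(V)_T\to\Lie_\circX(W)_T$; identifying both sides with Lie subalgebras of Fourier coefficients as in the proof of the previous proposition and using the Lie homomorphism $U(V)\to U(W)$ induced by $\varphi$, one checks that this map is a homomorphism of Lie algebras.

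Second, each conformal $W$-module $M_i$, restricted along $\varphi$, is a conformal $V$-module with the same underlying graded vector space and the same action of $L_0$ and of $L_n$ ($n>0$); hence the sheaves $\cM_i$, the fibres $(\cM_T)_{x_i}$, and the $\cO_T$-module $\bigotimes_i(\cM_T)_{x_i}$ are literally the same whether formed over $V$ or over $W$. Since $Y^V_{M_i}(v,z)=Y^W_{M_i}(\varphi(v),z)$, the residue maps $(\cY^\vee_{M_i,x_i})_T$ of the two theories are intertwined by $\varphi$; combined with compatibility of the pullbacks $(\hat{x}_i)_\circ^*$, this gives a commuting diagram expressing that the action of any $\sigma\in\Lie_\circX(V)_T$ on $\bigotimes_i(\cM_T)_{x_i}$ equals the action of its image $\bar\sigma\in\Lie_\circX(W)_T$. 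Consequently $\Lie_\circX(V)_T\cdot\bigotimes_i(\cM_T)_{x_i}\subseteq\Lie_\circX(W)_T\cdot\bigotimes_i(\cM_T)_{x_i}$, so the identity of $\bigotimes_i(\cM_T)_{x_i}$ descends to a surjection $H_V(X,(x_i),(M_i))_T\twoheadrightarrow H_W(X,(x_i),(M_i))_T$. As $C_V(X,(x_i),(M_i))_T=\Hom_{\cO_T}(H_V(X,(x_i),(M_i))_T,\cO_T)$ and likewise for $W$, applying the left exact functor $\Hom_{\cO_T}(-,\cO_T)$ and noting that a functional on $H_W$ which vanishes after pullback along a surjection must itself be zero, we obtain the desired monomorphism $C_W(X,(x_i),(M_i))_T\hookrightarrow C_V(X,(x_i),(M_i))_T$; concretely, it is the inclusion of those $\Lie_\circX(V)_T$-invariant functionals on $\bigotimes_i(\cM_T)_{x_i}$ that happen to be $\Lie_\circX(W)_T$-invariant.

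The main obstacle is the verification in the second step that all the relevant diagrams commute after pulling back to the formal punctured discs: one must check that the map $\Lie_\circX(V)_T\to\Lie_\circX(W)_T$ of step one, the pullbacks $(\hat{x}_i)_\circ^*$, and the vertex operations $(\cY^\vee)_T$ together form a commuting diagram, i.e.\ that the two routes from $\Lie_\circX(V)_T$ to $\bigotimes_i\End\cM_{x_i}(T)$ — acting directly through the $V$-structure, or factoring through $\Lie_\circX(W)_T$ and acting through the $W$-structure — coincide. This is naturality of the module action in the vertex algebra, which on the level of Fourier coefficients and residues reduces to the identity $Y^V_{M_i}(v,z)=Y^W_{M_i}(\varphi(v),z)$; the care needed is in ensuring that nothing is lost when passing from the local pictures on the $D^\times_{x_i}$ to global sections over $\circX$ and when taking $\Autu^0\cO(T)$-invariants.
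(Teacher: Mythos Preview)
Your proposal is correct and follows essentially the same route as the paper: both produce a Lie algebra map $\Lie_\circX(V)_T\to\Lie_\circX(W)_T$ from the bundle map $\cV\to\cW$ induced by $\varphi\otimes\id$ (well-defined modulo total derivatives because $\varphi$ intertwines the Virasoro operators), observe that the $W$-module sheaves $\cM_i$ coincide with their $V$-restrictions, and conclude that $\Lie_\circX(W)_T$-equivariant functionals are a fortiori $\Lie_\circX(V)_T$-equivariant. Your write-up is in fact more complete than the paper's, which reduces to the $1$-point case and then stops at ``there is a morphism of spaces of conformal blocks'' without spelling out the injectivity; your dualization of the surjection of coinvariants makes that step explicit.
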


    \begin{proof}
    	It suffices to prove the 1-point case. Let $\phi:V\to W$ denote the homomorphism of conformal vertex algebras. Then a conformal $W$-module $M$ carries a conformal $V$-module structure
    	via composing
    	$\phi$ and $Y_M:W\to\End M[[z^{\pm1}]]$. 
    	There is a morphism of vertex algebra bundles $\cV\to\cW$ induced by $\phi\otimes \id$ in the sense of
    	$\cV(U)=(V\otimes\pi_*\cO_{\Fra^\infty_X}(U))^{\Autu^0\cO}$.
    	Thus we obtain a morphism $\cV_{T}\otimes\Omega^1_{X_T/T}
    	\to
    	\cW_{T}\otimes\Omega^1_{X_T/T}$. If we denote by $\bar{v}$ the image of $v\in V$ in $W$, then since $\overline{L(z)v}=\bar{L}(z)\bar{v}$, 
    	we obtain a well-defined morphism $\Lie_\circX(V)_T\to\Lie_\circX(W)_T$. Therefore there is a morphism of spaces of coinvariants
    	$$H_V(X,x,M)_T\to H_W(X,x,M)_T$$
    	induced by  $\Lie_\circX(V)_T\to\Lie_\circX(W)_T$. 
    	This naturally induces a morphism of spaces of conformal blocks as their duals.
    \end{proof}

    The following theorem is a weak version of the propagation of vacua.

    \begin{thm}
    	Let $x,y$ be two distinct $D_T$-points of $X_T$, and 
    	let $M$ be a conformal $V$-module with integral eigenvalues with respect to the action of $L_0$. Let $\circX_y$ denote the $y$-punctured space of $X$.
    	If $\cV_y(D_T)$ is an irreducible representation of $\hat{y}^*\Lie_{\circX_y}(V)_T$ through $(\cY_y^\vee)_T$, then we have a monomorphism of spaces of conformal blocks
    	$$C_V(X,(x,y),(M,V))_T
    	\hookrightarrow
    	C_V(X,x,M)_T.$$
    	Consequently, if $x_1,...,x_n$ are distinct $D_T$-points, with attached conformal $V$-modules $M_1,...,M_n$ respectively, on which the eigenvalues of $L_0$ are integral, then there is a monomorphism
    	$$C_V(X,(x_1,...,x_n,y),(M_1,...,M_n,V))_T
    	\hookrightarrow C_V(X,(x_1,...,x_n),(M_1,...,M_n))_T.$$
    \end{thm}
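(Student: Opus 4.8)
\textit{Proof plan.} It suffices to prove the two-point isomorphism $C_V(X,(x,y),(M,V))_T\simeq C_V(X,x,M)_T$; the general $n$-point version is obtained by the identical argument, the configuration $(x_1,\dots,x_n)$ with $(M_1,\dots,M_n)$ playing the role of the single datum $(x,M)$ and no map below ever touching the discs $D_{x_i}$. Write $\circX_x=X-\im x$ and $\circX=X-\im x-\im y$. The open immersions $\circX\hookrightarrow\circX_x$ and $\circX\hookrightarrow\circX_y$ induce, by restriction of sections, Lie algebra homomorphisms $\Lie_{\circX_x}(V)_T\to\Lie_\circX(V)_T$ and $\Lie_{\circX_y}(V)_T\to\Lie_\circX(V)_T$, compatible through $\hat x_\circ$ and $\hat y_\circ$ with the actions on $(\cM_T)_x$ and $\cV_y(T)$; in particular the $\Lie_\circX(V)_T$-action on $\cV_y(T)$ restricts along the second to the given $\Lie_{\circX_y}(V)_T$-action, so $\cV_y(T)$ is irreducible over $\Lie_\circX(V)_T$ as well. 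Finally $\0\in V$ satisfies $L_j\0=0$ for $j\ge 0$ and $L_0\0=0$, hence is $\Autu^0\cO(T)$-invariant and determines a canonical nonzero element $\1_y\in\cV_y(T)$.

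The plan is to show that the restriction map
$$\Phi\colon C_V(X,(x,y),(M,V))_T\longrightarrow C_V(X,x,M)_T,\qquad(\Phi\varphi)(m)=\varphi(m\otimes\1_y),$$
is an isomorphism. For well-definedness one needs $\Phi\varphi$ to be $\Lie_{\circX_x}(V)_T$-invariant. Given $\sigma\in\Lie_{\circX_x}(V)_T$ with image $\tilde\sigma\in\Lie_\circX(V)_T$, the $\Lie_\circX(V)_T$-action gives $\tilde\sigma\cdot(m\otimes\1_y)=(\sigma\cdot m)\otimes\1_y+m\otimes\bigl((\cY^\vee_y)_T(\hat y_\circ^*\tilde\sigma)\,\1_y\bigr)$, and $\varphi$ annihilates the left-hand side; the content is that the second summand already lies in $\Lie_\circX(V)_T\cdot\bigl((\cM_T)_x\otimes\cV_y(T)\bigr)$. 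This is the logarithmic form of the residue theorem: a section of $\cV_{X_T}\otimes\Omega^1_{X_T/T}$ regular on $\circX$ has vanishing total residue over $X$ when the residue at each framed point is taken against the \emph{logarithmic} differential $d\log z$ — so that the $d\bar e$-component is included — and, together with the creativity property $Y(v,z)\0\in V[[z]]$, which controls $(\cY^\vee_y)_T(\hat y_\circ^*\tilde\sigma)\,\1_y$, this produces the required cancellation.

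Injectivity of $\Phi$ is where irreducibility enters: since $\cV_y(T)$ is irreducible over $\Lie_{\circX_y}(V)_T$ and $\1_y\ne 0$, the nonzero submodule $U\bigl(\Lie_{\circX_y}(V)_T\bigr)\cdot\1_y$ equals $\cV_y(T)$, i.e. $\1_y$ is a cyclic vector. Hence if $\varphi\in C_V(X,(x,y),(M,V))_T$ vanishes on $(\cM_T)_x\otimes\1_y$, then, moving the enveloping-algebra factors past $\varphi$ by $\Lie_\circX(V)_T$-invariance exactly as above, $\varphi$ vanishes on all of $(\cM_T)_x\otimes\cV_y(T)$, so $\varphi=0$. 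For surjectivity, given $\psi\in C_V(X,x,M)_T$ I would extend it by: for $A\in\cV_y(T)$ choose $u\in U\bigl(\Lie_{\circX_y}(V)_T\bigr)$ with $u\cdot\1_y=A$ and set $\tilde\psi(m\otimes A)=\psi(\bar u\cdot m)$, where $\bar u$ is the operator on $(\cM_T)_x$ induced by the image of $u$ in $U\bigl(\Lie_\circX(V)_T\bigr)$ through $\hat x_\circ$ — legitimate because every section over $\circX_y$ is regular at $x$. Independence of the choice of $u$ (if $u\cdot\1_y=0$ then $\bar u\cdot m\in\Lie_{\circX_x}(V)_T\cdot(\cM_T)_x=\ker\psi$) is again the residue theorem; $\tilde\psi$ is then $\Lie_\circX(V)_T$-invariant, and $\Phi\tilde\psi=\psi$ by taking $u=1$.

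The step I expect to be the real obstacle is the two well-definedness verifications, i.e. putting the residue theorem in its correct logarithmic form. Classically the regular part of the local algebra at $y$ simply annihilates $\0$ and these checks are immediate; here the logarithmic structure at the framed point contributes a genuine residue through $d\log z$, so one must prove that the expansions at $x$ and at $y$, via $\hat x_\circ$ and $\hat y_\circ$, of a section regular on $\circX$ act by opposite operators on $(\cM_T)_x$ and on $\cV_y(T)$. This is exactly where replacing $\Omega^1_{\underline X/\underline S}$ by $\Omega^1_{X/S}$ is essential, and where the hypotheses that $x$ and $y$ are distinct $D_T$-points and that $M$ has integral $L_0$-spectrum — so that all the series occurring are honest Laurent series — are used.
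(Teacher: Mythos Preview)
Your approach is essentially the same as the paper's: define the restriction-to-vacuum map $\varphi\mapsto\varphi(-\otimes\1_y)$ and prove it is a bijection using that irreducibility of $\cV_y(T)$ over $\Lie_{\circX_y}(V)_T$ makes the vacuum a cyclic vector. The paper's own proof is considerably terser than yours---it declares surjectivity ``clear'' and does not isolate the residue-theorem input you (rightly) flag as the substantive point---so your plan is, if anything, a more careful execution of the same argument.
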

    
    \begin{proof}
    	Let $\circX_x$ denote the $x$-punctured space of $X$, 
    	and $\circX$ the punctured space of $X$ by both $x$ and $y$. 
    	From the inclusions
    	$\circX\hookrightarrow\circX_y$ and 
    	$\circX\hookrightarrow\circX_x$,
    	we have the corresponding restriction maps
    	$\cV_T((\circX_x)_T)\to\cV_T(\circX_T)$
    	and
    	$\Omega_{X_T/T}^1((\circX_y)_T)\to
    	\Omega_{X_T/T}^1(\circX_T)$.
    	For $\eta\in\cV_T((\circX_x)_T)$
    	such that $\nabla_T\xi=\eta$ for some
    	$\xi$,
        let $\bar{\xi},\bar{\eta}\in\cV_T(\circX_T)$
        denote the image of $\xi$ and
        $\eta$ under the restriction map respectively.
        Then clearly we have
        $\bar{\eta}=\nabla_T\bar{\xi}=0$ in $\cV_T(\circX_T)$.
        Thus we obtain a well-defined map
        $\Lie_{\circX_y}(V)_T\to
        \Lie_{\circX}(V)_T$
        induced from the above two restriction maps.
        
        If we fix coordinates at $x$ and $y$ to be $z_x$ and $z_y$ respectively,
        then 
        $$\cM_x=
        ((M\otimes\cO_T(T))\otimes\Aut_x(T))^{\Autu^0\cO(T)}$$
        is trivialized to $M\otimes\cO_T(T)$.
        Likewise, $(\cV_T)_y$ is trivialized to $V\otimes\cO_T(T)$.
        Define a map
        \begin{equation*}
        	\begin{aligned}
        		\Hom_{\Lie_\circX(V)_T}(\cM_x\otimes(\cV_T)_y,\cO_T(T))
        		&\to
        		\Hom_{\Lie_{\circX_x}(V)_T}(\cM_x,\cO_T(T))\\
        		\vp&\mapsto\bar{\vp}:m\mapsto\vp(m\otimes\0)
        	\end{aligned}
        \end{equation*}
        Note that we often omit the coefficient ring $\cO_T(T)$ in the rest of the proof,
        as well as the definition of $\bar{\vp}$ in the above map.
        Since any $\nu\in\Lie_{\circX_x}(V)_T$
        is regular outside $x$,
        it is also regular on $\circX$,
        and thus
        $\nu\in\Lie_{\circX}(V)_T$.
        This implies that the above map $\vp\mapsto\bar{\vp}$ is well-defined.
        Suppose that there are two conformal blocks $\vp,\psi\in C_V(X,(x,y),(M,V))_T$
        satisfying  $\bar{\vp}=\bar{\psi}$.
        For $v\in \cV_y(D_T)$,
        since $\cV_y(D_T)$ is 
        an irreducible representation of $\hat{y}^*\Lie_{\circX_y}(V)_T$,
        there exists some
        $\mu\in\Lie_{\circX_y}(V)_T$
        such that $v=\mu_y\cdot\0$
        where $\mu_y\in\hat{y}^*\Lie_{\circX_y}(V)_T$ is the image of $\mu$ in
        $\Lie_{D^\times}(V)_T$.
        As $\mu$ is regular at $x$ and the only possible pole on $X_T$ is $y$,
        we obtain that $\mu\in  \Lie_{\circX}(V)_T$.
        And this gives rise to an action of $\mu$ on 
        $(\cM_T)_x\otimes(\cV_T)_y$.
        Then for $m\in M\otimes\cO_T(T)$, 
        it follows that
        \begin{equation*}
        	\begin{aligned}
        		\vp(m\otimes v)
        		=
        		\vp(m\otimes\mu_y\cdot\0)
        		&=
        		-\vp(\mu_x\cdot m\otimes\0)
        		\\
        		&=-\psi(\mu_x\cdot m\otimes\0)
        		=\psi(m\otimes\mu_y\cdot\0)
        		=\psi(m\otimes v)
        	\end{aligned}
        \end{equation*}
        where the second equality follows from the definition of conformal blocks.
        This proves that $\vp=\psi$ and hence the map $\vp\mapsto\bar{\vp}$ is injective.
    	The $n$-point case statement follows from induction on $n$.
    \end{proof}

    \section{Normal crossing}
    
    We provide an example that illustrates our theory, which is also a typical construction in log geometry.
    This example will be discussed in two cases,
    depending on the log structure on the base scheme.
    Everything done in this section is over $\bC$. 
    For ease of notation, 
    we omit the subscript of the base change, 
    i.e., $X_\bC$ is denoted by $X$ for any (formal) log scheme $X$. 
    
    The first case is to take the base scheme to be 
    $S=\Spec(\bN\to\bC)$
    induced by the monoid morphism
    $\bN\to\bC$,
    $n\mapsto 0^n$.
    The log scheme $S$ is said to be a
    \textit{standard log point over $\bC$}
    in \cite{ogus}-III-1.5.2.
    The reason for choosing this log structure is that
    every fine log point over $\bC$ is dominated by
    $S$,
    in the sense of the following proposition.
    \begin{prop}[\cite{ogus}-III-1.5.5]
    	If $S'$ is any fine log point,
    	there exists an algebraically closed field $k$ and a morphism
    	$\Spec(\bN\to k)\to S'$.
    \end{prop}

    The other case is to endow the base scheme 
    $\underline{S}=\Spec\bC$
    with the trivial log structure
    $0\to \bC$.
    Unfortunately, 
    the trivial log scheme $\Spec(0\to \bC)$ behaves badly,
    and we will explain why.
    We start with introducing a useful criterion for 
    smoothness and étaleness.
    
    \begin{thm}[\cite{ogus}-IV-3.1.13]\label{etalesmooth}
    	Let 
    	$f: X \to Y$ 
    	be a morphism of log schemes, 
    	over a characteristic $0$ field $k$, 
    	admitting
    	a coherent chart 
    	$\theta: P\to Q$,
    	and let $x$ be a point of $X$,
    	satisfying the following
    	conditions:
    	\\
    	1) The kernel and the
    	cokernel 
    	(resp. the torsion part of the cokernel) 
    	of 
    	$\theta$ 
    	are finite groups
    	whose orders are both invertible in $k(x)$;
    	\\
    	2) The morphism of schemes
    	$\ul{X}\to
    	\ul{Y}\times_{\Spec k[P]}\Spec k[Q]$
    	is étale 
    	(resp. smooth) in some neighborhood of $x$.
    	\\
    	Then $f: X \to Y$ 
    	is étale (resp. smooth) in some neighborhood of $x$.
    \end{thm}

    \subsection{Case I:
    	base field $\bN\to\bC$}\label{casei}
    
    Let $R=\bC[x,y]/(xy)$, 
    and let $\circX=\Spec(\bN^2\to R)$ given by the chart $\alpha_{\circX}:\bN^2\to R$, $(m,n)\mapsto x^my^n$. 
    Let $S=\Spec(\bN\to \bC)$, with log structure induced by 
    $\bN\to\bC$, $n\mapsto 0^n$.
    The structure morphism $\circX\to S$ is given by the following log ring map
    $$
    \begin{tikzcd}
    	\bC \arrow[r, hook]     & {\bC[x,y]/(xy)} &  & 0^n \arrow[r, maps to]                  & x^ny^n                     \\
    	\bN \arrow[r] \arrow[u] & \bN^2 \arrow[u] &  & n \arrow[u, maps to] \arrow[r, maps to] & {(n,n)} \arrow[u, maps to]
    \end{tikzcd}
    $$
    From the construction of the module of log differentials in Section \ref{logsch} after Theorem \ref{logder},
    the module of log differentials $\Omega_{\circX/S}^1(\circX)$
    is isomorphic to
    $$
    \frac{\Omega_{R/\bC}^1\oplus(R\otimes\bZ^2)}{\langle
    (d\alpha_{\circX}(m),-\alpha_{\circX}(m)\otimes m),
    (0,1\otimes (n,n)),\ m\in\bN^2,n\in\bZ\rangle}.
    $$
    Observe that $\Omega_{R/\bC}^1$ is generated by $dx$ and $dy$,
    with $dx=d\alpha_{\circX}(1,0)$
    and
    $dy=d\alpha_{\circX}(0,1)$.
    Thus under the equivalence relation,
    the usual differential space
    $\Omega_{R/\bC}^1$ is
    ``included'' in 
    $R\otimes\bZ^2$,
    with relations (in the form (\ref{logdiff}))
    $$1\otimes (1,0)=d(1,0)=\frac{dx}{x},\quad 
    1\otimes (0,1)=d(0,1)=\frac{dy}{y}.$$
    In addition, 
    we have the relation
    $$0=1\otimes(1,1)=d(1,0)+d(0,1)=\frac{dx}{x}+\frac{dy}{y}.$$
    In summary,
    the space of global log differentials
    $\Omega_{\circX/S}^1(\circX)$
    is an $R$-module generated by
    $dx/x$ and $dy/y$
    with the relation
    $dx/x+dy/y=0$. 
    With respect to the base change
    $S\to\Spec(0\to\bZ)$,
    the discs $D^\times$ and $D^\circ$ (resp. $D$) are endowed with log structures induced by the charts $\bN\to\bC((t))$ and $\bN^2\to\bC[[t]]$ with $m\mapsto 0^m$ and $(m,n)\mapsto t^m0^n$ respectively.
    
    Let $X\subset\bP^2_\bC$ be the log scheme whose underlying scheme is determined by the homogeneous equation $xy=0$ with coordinates $(x,y,z)$ of $\bP^2_\bC$,
    endowed with the log structure induced by the embedding $\ul\circX\hookrightarrow \ul X$.
    Denote by $\infty_1=(1,0,0)$ and $\infty_2=(0,1,0)$. 
    Then it is easy to see that if we  remove these two points $\infty_{1,2}$ on $X$, we obtain the log scheme $\circX$, which is exactly the $z\not=0$ patch of $X$. 
    We denote by $D_1^\times$ and $D_2^\times$ the punctured discs centered at $\infty_1$ and $\infty_2$ respectively. 
    \begin{figure}[htbp]
    	\begin{center}
    		\begin{tikzpicture}
    			\draw [black](0,0) circle[radius=1];
    			\fill   (-1,0) circle[radius=1pt]  node [left,font=\small] {$\infty_1$};
    			\draw [black](2,0)circle[radius=1];
    			\fill   (3,0) circle[radius=1pt]  node [right,font=\small] {$\infty_2$};
    			\fill   (1,0) circle[radius=1pt]  node [right,font=\small] {$o$};
    		\end{tikzpicture}
    	\end{center}\caption{$xy=0$ in $\bP^2$}
    \end{figure}
    
    Let us determine the log structure on $X$.
    For an arbitrary monoid $P$,
    we write $\overline{P}=P/P^*$.
    For the node $o=(0,0,1)$, it is easy to see that $\overline{M}_{X,o}\simeq
    \overline{\bN^2\oplus R^*_o}\simeq\bN^2$. 
    If we have a $D$-point on $\circX$ such that the base point of $D$ is mapped to $o$,
    then we assume this map is given by $R\to\bC[[t]]$, $x\mapsto t$, $y\mapsto0$.
    In fact, if $x\mapsto t$, $y\mapsto g(t)$, then $0=xy\mapsto tg(t)$.
    Since $\bC[[t]]$ is an integral domain,
    we may take $g(t)=0$.
    Moreover, 
    the log structure is fitted  into the map through
    $\id_{\bN^2}$.
    Therefore such a point also gives rise to a coordinate at $o$.
    \begin{rk}
    	We now explain that the above log ring map corresponding to $D\to\circX$, 
    	which is denoted by $\theta_o$ in the sequel, given by
    	$$
    	\begin{tikzcd}
    		R \arrow[r, "\theta_o^\sharp"]                                             & {\bC[[t]]}                   &  & x^my^n \arrow[r, maps to]                     & t^m0^n                     \\
    		\bN^2 \arrow[r, "\theta_o^\flat=\id_{\bN^2}"] \arrow[u, "\alpha_{\circX}"] & \bN^2 \arrow[u, "\alpha_D"'] &  & {(m,n)} \arrow[u, maps to] \arrow[r, maps to] & {(m,n)} \arrow[u, maps to]
    	\end{tikzcd}
        $$
        is formally étale, 
        and hence gives rise to a coordinate at $o$.
        Let us decompose 
        $\theta_o:D\to \circX$ into
        $D\stackrel{j}{\longrightarrow}
        \Spec(\bN^2\to\bC[t])
        \stackrel{\theta_o^0}{\longrightarrow}
        \circX$,
        where 
        $\Spec(\bN^2\to\bC[t])$
        is induced by the monoid morphism
        $(m,n)\mapsto t^m0^n$,
        and $j$ is the natural map.
        It is clear that condition (1) in Theorem \ref{etalesmooth} is
        satisfied for the identity map.
        For condition (2),
        let $A$ be a commutative $\bC$-algebra with a square zero ideal $J$,
        and suppose there is a commutative diagram
        $$
        \begin{tikzcd}
        	{R\otimes_{\bC[\bN^2]}\bC[\bN^2]} \arrow[d, "s"'] \arrow[r] & {\bC[t]} \arrow[d] \\
        	A \arrow[r, "p"]                                            & A/J                 
        \end{tikzcd}
        $$
        where $p$ is the quotient map.
        Define $h:\bC[t]\to A$ by
        $h(a(t))=s(a(x)\otimes1)$.
        It is straightforward to check that $h$
        makes the two triangles in the diagram commutative.
        Moreover 
        $\Omega^1_{\bC[t]/R\otimes_{\bC[\bN^2]}\bC[\bN^2]}=0$ 
        implies the unramification.
        Thus the associated morphism of schemes
        is étale,
        and then the log scheme morphism 
        $\theta^0_o$ is étale.
        On the other hand,
        it is obvious that
        $j$ is formally étale,
        and therefore
        $\theta_o=\theta_o^0\circ j$
        is formally étale.
    \end{rk}
    Let $\xi$ be a smooth point on $\circX$ in usual sense.
    Without losing generality, suppose that $\xi$ lies in the $x\not=0$ patch $U\subset\circX$.
    Then the embedding of the underlying schemes $U\hookrightarrow \circX$ induces the log structure on $U$ via
    $$
    \begin{tikzcd}
    	{\bC[y^{\pm1}]} & {\bC[x,y]/(xy)} \arrow[l]  \\
    	& \bN^2 \arrow[lu] \arrow[u]
    \end{tikzcd}
    $$
    where the horizontal map is given by $x\mapsto0$, $y\mapsto y$.
    Thus the above log ring $\bN^2\to\bC[y^{\pm1}]$
    is given by
    $(m,n)\mapsto 0^my^n$,
    which implies that $$\overline{M}_\xi
    \simeq\overline{\bN^2\oplus_{\bN}(\bC[y^{\pm1}])^*}
    \simeq\bN.
    $$
    So the fibers of the characteristic sheaf $\overline{M}_X$ at smooth (in usual sense) points 
    $\infty_1$ and $\infty_2$ are both $\bN$,
    and the log structures near $\infty_1$ and $\infty_2$
    are given by charts $\bN^2\to\bC[x^{-1}]$, $(m,n)\mapsto 0^mx^{-n}$
    and
    $\bN^2\to\bC[y^{-1}]$, $(m,n)\mapsto y^{-m}0^n$
    respectively.
    
    In summary,
    locally the log structure on $X$ can be described as
    $\bN\to\bC[x]$
    (resp. $\bN\to\bC[x^{-1}]$,
    $\bN\to\bC[y]$,
    $\bN\to\bC[y^{-1}]$)
    near smooth points
    and 
    $\bN^2\to R$ near the node.

    After clarifying the log structure on $X$,
    we apply Theorem 
    \ref{etalesmooth}
    to discuss the smoothness of $X$ near $o$.
    The structure morphism
    $f:X\to S$ admits a 
    chart 
    $\Delta:\bN\to\bN^2$
    as the diagonal map.
    Since $\circX$ is a neighborhood of $o$;
    the kernel and torsion part of the cokernel of $\Delta$ are both $0$,
    conditions (1) and (2) in the above theorem are satisfied.
    For condition (3),
    the morphism of schemes
    $\ul{\circX}\to
    \ul{S}\times_{\Spec \bC[\bN]}\Spec \bC[\bN^2]$ corresponds to the ring homomorphism
    $$
    b_{\theta}:\bC\otimes_{\bC[t]}\bC[x,y]
    \to
    R=\bC[x,y]/(xy)$$
    which is induced by $\bC$-homomorphisms:
    \\
    1) $\bC[t]\to\bC$, $t\mapsto 0$;
    \\
    2) $\bC[t]\mapsto \bC[x,y]$,
    $t\mapsto xy$;
    \\
    3) $\bC\hookrightarrow R$;
    \\
    4) $\bC[x,y]\to R$ the quotient map.
    \\
    Let $A$ be a $\bC$-algebra with
    an ideal $J$ of square zero.
    Suppose there is a commutative solid diagram
    $$
    \begin{tikzcd}
    	{\bC\otimes_{\bC[t]}\bC[x,y]} \arrow[d, "f"'] \arrow[r, "b_{\theta}"] & R \arrow[d, "g"] \arrow[ld, "h"', dashed] \\
    	A \arrow[r, "p"]                                                      & A/J                                      
    \end{tikzcd}
    $$
    where $p$ is the quotient map.
    If we define $h:R\to A$
    by
    $h(x)=f(1\otimes x)$,
    $h(y)=f(1\otimes y)$,
    then it is straightforward to check that $h$ makes the diagram commutative.
    Thus by definition $b_\theta$
    is formally smooth.
    Therefore the structure morphism 
    $\circX\to S$ is smooth as its underlying morphism of schemes is of finite presentation.
    Moreover,
    smoothness at $\infty_{1,2}$
    is trivial from the smoothness
    of the morphism of underlying schemes.
    Hence $X\to S$ is smooth and thus is a log curve.

    Let $V$ be a conformal vertex algebra bounded from below, 
    and let $M_1$ (resp. $M_2$) be an inserted $V$-module at $\infty_1$ (resp. $\infty_2$)
    on which $L_0^{M_1}$ (resp. $L_0^{M_2}$)
    acts with integral eigenvalues.
    Define a local coordinate $z_1$ (resp. $z_2$) at $\infty_1$ (resp. $\infty_2$) by 
    \begin{equation}
    	\label{coorz}
    	\begin{tikzcd}
    		{\bC[x^{-1}]} \arrow[r, "z_1^\sharp"] & {\bC[[t]]}      &  & x^{-m}0^n \arrow[r, maps to]               & t^m                        \\
    		\bN^2 \arrow[u] \arrow[r, "z_1^\flat=\id_{\bN^2}"]  & \bN^2 \arrow[u] &  & (m,n) \arrow[u, maps to] \arrow[r, maps to] & {(m,n)} \arrow[u, maps to]
    	\end{tikzcd}
    \end{equation}
    (resp. $z_2$ in the quite similar way via replacing $x$ by $y$).
    Therefore for a log differential $\omega=f(x,y)\frac{dx}{x}+g(x,y)\frac{dy}{y}$ over $\circX$, 
    the pull-back of $\omega$ to $D_{1}^\times$ 
    (resp. $D_{2}^\times$) is $(f(t^{-1},0)-g(t^{-1},0))\frac{dt^{-1}}{t^{-1}}$ 
    (resp. $(g(0,t^{-1})-f(0,t^{-1}))\frac{dt^{-1}}{t^{-1}}$).
    
    We give a description of the $\Lie_{\circX}(V)$-action.
    For a $D$-point $p$ on $\circX$ located in the $x\not=0$ patch $U$,
    by the Cohen structure theorem, 
    we have an isomorphism $\Spec\widehat{
    \cO}_p\simeq \Spec\bC[[t]]\simeq\underline{D}^\circ$.
    The log structure on $U$ is induced by that on $X$, whose local chart is $\bN^2\to\cO_X(U)$.
    Since the log structure on $D$ is $\bN^2\to\bC[[t]]$, $(0,1)\mapsto t$, 
    with the (log) coordinate induced by
    $\id_{\bN^2}$,
    it follows that the (log) coordinates on $U$ is the same as that in usual sense (by means of coordinates in \cite{frenkel-benzvi},
    without log structures).
    
    \begin{rk}[\textbf{splitting of $\cV$}, \cite{frenkel-benzvi}-6.5.9]\label{split}
    	Suppose $V$ is $\bZ$-graded and bounded from below,
    	with finite dimensional homogeneous components $V_n$, $n\geq K$ for some $K\in\bZ$.
    	If $A\in V_\Delta$ is a primary vector 
    	(recall Definition \ref{primary}),
    	then $\bC A$ is an $\Autu^0\cO(\bC)$-submodule of $V$.
    	This gives rise to an subbundle 
    	$(\bC A\otimes\pi_*\cO_{\Fra_D^\infty}(\bC))^{\Autu^0\cO(\bC)}$
    	of $\cV_\bC$ over $D$.
    	Each $\rho\in \Autu^0\cO(\bC)$
    	acts on $A$ by means of
    	formula (\ref{action}).
    	Since $A$ is primary we see that
    	$\rho(z)^{-1}A=(\partial_z\rho(z))^\Delta A$.
    	And thus the coordinate transformation of the subbundle is of the same form with that
    	$(dz)^{-\Delta}=(\rho'(z))^\Delta(d\rho(z))^{-\Delta}$
    	of $\Omega_{D}^{-\Delta}$.   
    	On the other hand,
    	observe that
    	$V_{\leq m}=\oplus_{n=K}^m V_n$
    	is an $\Autu^0\cO(\bC)$-submodule of $V$
    	and there is an exact sequence of vector spaces
    	$$0\to
    	V_{\leq (m-1)}\to
    	V_{\leq m}\to
    	V_m\to0.$$
    	We note that $V_m$ is not preserved by $\Autu^0\cO(\bC)$.
    	But if we consider $V_m$ as a quotient of
    	$V_{\leq m}$
    	by $V_{\leq (m-1)}$,
    	then the above sequence is exact as $\Autu^0\cO(\bC)$-modules,
    	and $V_m$ is a direct sum of one-dimensional representations of $\Autu^0\cO(\bC)$.
    	The associated exact sequence of bundles over $D$:
    	$$0\to
    	\cV_{\leq (m-1)}\to
    	\cV_{\leq m}\to
    	\cV_m\to0$$
    	thus gives rise to an isomorphism of vector bundles
    	$$\cV_m\simeq (\Omega_{D}^{-m})^{\oplus\dim V_m}.$$
    	Therefore as vector bundles,
    	we have a splitting $$\cV_\bC\simeq \bigoplus_{n\geq K}(\Omega_{D}^{-n})^{\oplus\dim V_n}.$$
    \end{rk}
    
    From the above remark,
    we have a splitting $$\cV_{\infty_1}(D)\simeq
    \bigoplus_{n} V_n\otimes\Omega_{D}^{-n}(D)$$
    by $v\otimes t^{-1}\mapsto
    v\otimes(dt^{-1})^{-m}$ for $v\in V_m$.
	Suppose that $f,g\in\bC[x,y]/(xy)\simeq\cO_X(\circX)$ 
	are of the form
	$$f=a_0+a_1x+a_2x^2+...+b_1y+b_2y^2+...,$$
	$$g=a_0'+a_1'x+a_2'x^2+...+b_1'y+b_2'y^2+...$$
	Then by fixing coordinates $z_1$ and $z_2$
	from (\ref{coorz}),
	the pull-backs of $f\frac{dx}{x}+g\frac{dy}{y}\in\Omega_X^1(\circX)$
	in $\Gamma(D^\times_1,\Omega_{X}^1)$ 
	and $\Gamma(D^\times_2,\Omega_{X}^1)$
	are
	$$
	(a_0-a_0'+(a_1-a_1')t^{-1}+(a_2-a_2')t^{-2}+...)\frac{dt^{-1}}{t^{-1}}$$ 
	$$(a_0'-a_0+(b_1'-b_1)t^{-1}+(b_2'-b_2)t^{-2}+...)\frac{dt^{-1}}{t^{-1}}
	$$
	respectively
	since $dx/x=-dy/y$.
	Therefore the space of coinvariants associated to $\circX$ is isomorphic to
	$$V\otimes V/\Lie_{\circX}(V)\cdot (V\otimes V)$$
	where the action of $\mu\in \Lie_{\circX}(V)$ is given by
	\begin{equation}\label{exaction}
		\mu\cdot(u\otimes v)=\cY^\vee_{\infty_1}(\mu|_{D^\times_{1}})u\otimes v+u\otimes\cY^\vee_{\infty_2}(\mu|_{D^\times_{2}})v.
	\end{equation}
	In fact,
	for $v\otimes \left(f\frac{dx}{x}+g\frac{dy}{y}\right)$,
	its pull-back to $D_{i}^\times$, $i=1,2$ is supposed to be a class in
	$$\im\left(
	\cV_{{\infty_i}}(D_i)\otimes\Omega_{D_{i}}^1(D_i)
	\to
	\Gamma(D_i^\times,\cV_\bC)\otimes
	\Gamma(D_i^\times,\Omega_{X}^1)\right),\quad i=1,2,$$
	since $f,g\in\cO_X(\circX)$ are regular over $\circX$.
	Moreover it follows from Remark \ref{split} that
	$$\cV_{{\infty_i}}(D_i)\otimes\Omega_{D_{i}}^1(D_i)
	\simeq
	\bigoplus_{n} V_n\otimes\Omega_{D_i}^{1-n}(D_i)$$
	and thus the associated vertex operator is 
	\begin{equation*}
		\begin{aligned}
			{}&\cY_{\infty_1}^\vee\left(\left(v\otimes\left(f\frac{dx}{x}+g\frac{dy}{y}\right)\right)\Big|_{D^\times_{1}}\right)
			\\
			&=\cY_{\infty_1}^\vee\left(\left(v^{1}\otimes(a_0- a_0')t+v^0\otimes(a_1-a_1')+v^{-1}\otimes(a_2-a_2')t^{-1}+...)dt^{-1}\right)\right)
			\\
			&=(a_0-a_0')v^1_{-1}+(a_1-a_1')v_0^0+(a_2-a_2')v_{1}^{-1}+...
		\end{aligned}
	\end{equation*}
    for some homogeneous $v^i\in V_i$, $i\in\bZ_{\leq1}$
    such that
    $v=\sum_i v^i$ is a finite sum.
    For $a\in V_k$, $k\geq0$, based on the splitting of $\cV_{\infty_1}(D_1)$,  we have that
    $$a\otimes (dt^{-1})^{-k}t^{-k-l}\frac{dt^{-1}}{t^{-1}}=a\otimes t^{1-k-l}(dt^{-1})^{1-k} \stackrel{\cY_{\infty_1}^\vee}{\longmapsto}a_{l+k-1}$$
    The above $t^{-k-l}$ comes from a restriction of regular function (series) on $D^\circ$,
    and thus $l+k\geq0$.
    The action induced by $\Lie_{\circX}(V)$ near $\infty_1$ is then a collection of operators
    $$\{a_{l+k-1}:a\in V_k, l+k\geq0\}$$
    such that each $a_{l+k-1}$ is of degree $-l$.
    Roughly the same,
    it follows that
    $$\cY_{\infty_2}^\vee\left(\left(v\otimes \left(f\frac{dx}{x}+g\frac{dy}{y}\right)\right)\Big|_{D^\times_{2}}\right)
    =(a_0'-a_0)v^1_{-1}+(b_1'-b_1)v_0^0+(b_2'-b_2)v_{1}^{-1}+...$$
    The action induced by $\Lie_{\circX}(V)$ near $\infty_2$
    is of the same form with that near $\infty_1$.
    Comparing the coefficients, we see that the condition $1-k-l=1$ implies
    $k=-l$, 
    and thus the operators 
    induced by the $v^1_{-1}$-terms are $a_{-1}\otimes\id-\id\otimes a_{-1}$
    for homogeneous $a$.
    Therefore the $\Lie_\circX(V)$-action in (\ref{exaction}) on $V\otimes V$ is given by
    $$\cL:=\{a_{-1}\otimes\id-\id\otimes a_{-1}\}\cup
    \{ca_m\otimes\id+c'\id\otimes a_n: m,n\geq0, c,c'\in\bC\}$$
    where $a$ ranges over all homogeneous vectors in $V$.
	
	\begin{thm}
		The space of coinvariants associated to $(X,(\infty_1,\infty_2),(V,V))$ is the zero vector space.
	\end{thm}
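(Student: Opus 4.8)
The plan is to read the vanishing directly off the explicit description of the $\Lie_{\circX}(V)$-action established just above. Recall that the space of coinvariants in question is $(V\otimes V)/\cL\cdot(V\otimes V)$, where $\cL$ is spanned by the operators $a_{-1}\otimes\id-\id\otimes a_{-1}$ together with $a_m\otimes\id$ and $\id\otimes a_n$ for $a\in V$ and $m,n\geq 0$. I would first isolate three elements of $\cL$ arising from the conformal vector $\omega\in V_2$: taking $a=\omega$ and $m=1$ gives $\omega_1\otimes\id=L_0\otimes\id$; symmetrically $\id\otimes\omega_1=\id\otimes L_0$ (the choice $c=0$, $c'=1$, $a=\omega$, $n=1$); and taking $a=\omega$, $m=3$ gives $\omega_3\otimes\id=L_2\otimes\id$.

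Next I would use the grading to cut the problem down to a single line. Since $V$ is of CFT-type, $V=\bigoplus_{n\geq 0}V_n$ with $V_0=\bC\0$, so $V\otimes V=\bigoplus_{m,n\geq 0}V_m\otimes V_n$. The operator $L_0\otimes\id$ acts on $V_m\otimes V_n$ as multiplication by the scalar $m$, and $\id\otimes L_0$ as multiplication by $n$; hence for every $(m,n)\neq(0,0)$ the summand $V_m\otimes V_n$ lies in $\cL\cdot(V\otimes V)$. Consequently the natural map $\bC(\0\otimes\0)=V_0\otimes V_0\to(V\otimes V)/\cL\cdot(V\otimes V)$ is surjective, and it remains only to show that $\0\otimes\0\in\cL\cdot(V\otimes V)$.

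Finally I would remove this vacuum line using $L_2\otimes\id\in\cL$. Since $L_2$ maps $V_2$ into $V_0=\bC\0$, it suffices to exhibit some $v\in V_2$ with $L_2v\neq 0$; then $(L_2\otimes\id)(v\otimes\0)=(L_2v)\otimes\0$ is a nonzero scalar multiple of $\0\otimes\0$. The Virasoro relation $[L_2,L_{-2}]=4L_0+\tfrac{c}{2}\id$ applied to $\0$ gives $L_2\omega=\tfrac{c}{2}\0$, which settles the matter whenever the central charge is nonzero (as is the case, e.g., for the Heisenberg vertex algebra of nonzero level mentioned in the preceding remark). In general one invokes the invariant bilinear form $\langle\cdot,\cdot\rangle$ on $V$, for which $L_{-n}$ is adjoint to $L_n$, so that $L_2v=\langle L_{-2}\0,v\rangle\,\0=\langle\omega,v\rangle\,\0$ for $v\in V_2$; nondegeneracy of the form — which holds for instance when $V$ is simple — then forces $\langle\omega,v\rangle\neq 0$ for some $v$, since $\omega\neq 0$. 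I expect this last point to be the only real content of the theorem: the reduction to $\0\otimes\0$ is formal once the shape of $\cL$ is known, but eliminating the vacuum line genuinely requires an operator in $\cL$ that maps some positive-weight subspace onto $V_0$ nontrivially, and thus uses the standing hypotheses on $V$ rather than merely the presence of $L_0\otimes\id$.
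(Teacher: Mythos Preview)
Your argument begins exactly where the paper's ends: the paper's entire justification is the sentence immediately preceding the theorem, observing that the choice $a=\omega$, $m=1$ gives $L_0\otimes\id\in\cL$, after which the result is asserted ``as a consequence'' with no further comment. So the reduction step via $L_0\otimes\id$ (and, symmetrically, $\id\otimes L_0$) \emph{is} the paper's approach; your treatment of the remaining weight-zero piece $V_0\otimes V_0$ goes beyond what the paper actually supplies.

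Two caveats on that extra step. First, the paper's definition of CFT-type only demands $V=\bigoplus_{n\geq 0}V_n$, not $V_0=\bC\0$, so your reduction to the single line $\bC(\0\otimes\0)$ imports a hypothesis the paper has not stated (though it is the standard meaning of CFT-type elsewhere). Second, as you yourself flag, eliminating $\0\otimes\0$ via $L_2\otimes\id$ genuinely needs $c\neq 0$ or a nondegenerate invariant form; neither appears among the paper's hypotheses. Your closing instinct that this is ``the only real content of the theorem'' is on the mark, and it exposes that the paper's one-line appeal to $L_0\otimes\id$ is incomplete as written: for instance, in the universal Virasoro vertex algebra at $c=0$ one has $L_2\omega=\tfrac{c}{2}\0=0$, and the grading operator alone cannot reach $\0\otimes\0$. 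In short, you have not taken a different route from the paper---you have completed it, and in doing so surfaced hypotheses the paper leaves implicit.
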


    \begin{proof}
    	From the above, 
    	it follows that
    	the coinvariant space is isomorphic to
    	$$V\otimes V/\cL\cdot(V\otimes V).$$
    	Let $c\not=0$, $c'=0$, $a=\omega$, $m=1$.
    	Then we obtain a nonzero operator 
    	$cL_0\otimes \id$.
    	We have an isomorphism
    	$V\otimes V\simeq\oplus_{n\geq K}(V_n\otimes V)$,
    	and let $A^n\otimes B\in V_n\otimes V$.
    	Then it is easy to see that
    	$$A^n\otimes B=\left(\frac{1}{n}L_0\otimes\id\right)
    	(A^n\otimes B).$$
    	This shows that the action of $\cL$ is transitive on 
    	$V\otimes V$,
    	which completes the proof.
    \end{proof}

    \begin{rk}
    	The above theorem seems to be against intuition.
    	Over the complex projective line $\bP^1$, 
    	it is known that (\cite{frenkel-benzvi}-10.4.1)
    	$$\dim C_V(\bP^1,(x,y),(V,V))=
    	\dim C_V(\bP^1,x,V)=1.
    	$$ 
    	The nodal curve $\underline{X}$ corresponds to a boundary point in the family $\{X_t\}$ of genus-$0$ smooth curves in $\bP^2$ determined by the homogeneous equations $xy=tz^2$, parameterized by $t\in\bR_{>0}$.
    	Intuitively we may consider $\underline{X}$ as ``
    	$\lim_{t\to0}X_t$".
    	For each $t\not=0$,
    	the quotient space $X_t/\{\pm1\}$ 
    	($\pm1$ acts on $z$ by scalar multiplication)
    	is isomorphic to $\bP^1$,
    	and thus the conformal block spaces associated to the same vertex algebra $V$ over these curves are of the same dimension.
    	In general, the structure of the associated bundle over $\underline{X}$ is believed to be more complicated than those over $X_t$, 
    	but it turns to be on the opposite side.
    	The reason is that for any differential form on $\bP^1-\{x,y\}$,
    	its restrictions to
    	the punctured discs $D_x^\times$ and $D_y^\times$ are closely related.
    	In contrast, for a log differential $fdx/x+gdy/y$ over $X$, 
    	its restrictions on different patches turns to be essentially unrelated except for the constant terms,
    	due to the log structure on the base field 
    	$\bN\to \bC$.
    	Consequently the resulting $\Lie_{\circX}(V)$-action is much bigger than expected  to be.
    \end{rk}

    \subsection{Case II: base field $0\to\bC$}
    
    We use the same notation as in the preceding case,
    and additionally set $S_0=\Spec(0\to\bC)$.
    To distinguish the base change from that via 
    $S=\Spec(\bN\to\bC)$,
    for the discs $D$, $D^\circ$ and $D^\times$,
    we denote by
    $D_0$, $D_0^\circ$, $D^\times_0$
    their base changes via $S_0$ respectively.
    Concretely,
    \begin{equation*}
    	\begin{aligned}
    		D_0&=\Spf(\bN\to \bC[[t]]),\quad n\mapsto t^n;\\
    		D^\circ_0&=\Spec(\bN\to \bC[[t]]),\quad n\mapsto t^n;\\
    		D^\times_0&=\Spec(0\to \bC((t))).
    	\end{aligned}
    \end{equation*}

    Then the $S_0$-log scheme structure of $X$ is given by
    $$
    \begin{tikzcd}
    	\bC \arrow[r]          & R               &  & 1 \arrow[r, maps to]                    & 1                          \\
    	0 \arrow[r] \arrow[u] & \bN^2 \arrow[u] &  & 0 \arrow[r, maps to] \arrow[u, maps to] & {(0,0)} \arrow[u, maps to]
    \end{tikzcd}
    $$
    From the construction of the module of log differentials in Section \ref{logsch},
    the $R$-module
    $\Omega_{\circX/S_0}^1(\circX)$
    is generated by
    $\frac{dx}{x}$ and $\frac{dy}{y}$.
    We note that the relation 
    $\frac{dx}{x}+\frac{dy}{y}=0$
    in Case I
    does not hold in Case II.
	
	However,
	it is hardly possible to define a coordinate at $o$ in Case II,
	for the following reason.
	Consider the following diagram 
	(not necessarily commutative)
	associated to a coordinate
	(if it were to exist):
	$$
	\begin{tikzcd}
		x^my^n                     & R \arrow[r]               & {\bC[[t]]}    & t^m                  \\
		{(m,n)} \arrow[u, maps to] & \bN^2 \arrow[r] \arrow[u] & \bN \arrow[u] & m \arrow[u, maps to]
	\end{tikzcd}
    $$
	Suppose that the bottom horizontal morphism of monoids
	$\bN^2\to \bN$
	is given by
	$(1,0)\mapsto p$, $(0,1)\mapsto q$,
	and then
	$(m,n)\mapsto mp+nq$. 
    If the above square is commutative,
    then
    $x^my^n\mapsto t^{mp+nq}$.
    However,
    it is easy to see that
    $mn\not=0$
    implies
    $x^my^n=0$ while
    $t^{mp+nq}\not=0$.
    Hence we cannot expect to define a coordinate if the log structure on the base field is trivial.
    This also suggests that a non-trivial additional structure on the base field is necessary when a node appears,
    which is precisely why we use log geometry.
	
\nocite{*}
 \bibliographystyle{plain}
 \bibliography{valog}

\end{document}